\renewcommand\nompreamble{\begin{multicols}{2}}
\renewcommand\nompostamble{\end{multicols}}
\renewcommand{\fnum@figure}{\small\textbf{\figurename\thefigure}} 
\numberwithin{equation}{section}%
\newtheorem{theorem}{Theorem}[section]
\newtheorem{lemma}{Lemma}[section]
\newtheorem{proposition}{Proposition}[section]
\newtheorem{remark}{Remark}[section]
\newcommand{\dx} {\displaystyle\mathrm{d}x}
\newcommand{\ds} {\displaystyle\mathrm{d}s}
\def\ds{\displaystyle}
\title{\textbf{Identification of the initial value for a space-time fractional diffusion equation}}
\author{Mohamed BenSalah\thanks{ Mohamed BenSalah,\hfil\break
Department of Mathematics, ISSAT of Sousse,\newline
University of Sousse, Rue Tahar Ben Achour, Sousse 4003, Tunisia\hfil\break
E-mail : mohamed.bensalah@fsm.rnu.tn} \, and  Salih Tatar\thanks{ Salih Tatar,\hfil\break
Department of Mathematics $\&$  Computer Science, College of Science and General Studies\newline
Alfaisal University, Riyadh, KSA,\hfil\break
E-Mail : statar@alfaisal.edu}}
\begin{document}
\maketitle
\begin{abstract}
In this paper, we study an inverse problem for identifying the initial value in a space-time fractional diffusion equation from the final time data. We
show the identifiability of this inverse problem by proving the existence of its
unique solution with respect to the final observed data. It is proved that the inverse problem is an ill-posed problem. Namely, we prove that the solution to the inverse problem does not depend continuously on the measured data. The inverse problem is
formulated as a regularized optimization one minimizing a least-squares type
cost functional. Then the conjugate gradient method combined with Morozov's discrepancy is proposed for finding a stable approximate solution to the regularized variational problem. Numerical examples with noise-free and noisy data illustrate the applicability and high accuracy of the proposed method to some extent. \\

\noindent \textbf{Keywords:} Fractional Laplacian; Inverse problem; Fractional diffusion equation; Existence and uniqueness; Ill-posed problem; Regularization method; Optimization problem.\\

\noindent \textbf{MSC 2010:} 35R30; 35R11; 47A52; 49N30; 34K28
\end{abstract}
\nomenclature{$T : $}{Final time} \nomenclature{$\alpha : $}{Fractional order of the time derivative}
\nomenclature{$\Omega: $}{Domain of $\mathbb{R}^d$, $d\geq 1$}  \nomenclature{$s :$}{Fractional order of the space derivative}
\nomenclature{$\Gamma :$}{Gamma function}  \nomenclature{$\delta :$}{Dirac delta function}
\nomenclature{$L^2(\Omega ) :$}{Set of square integrable functions on $\Omega$} \nomenclature{$H^\alpha(0,T) :$}{Fractional Sobolev space in time}
\nomenclature{$\alpha_k :$}{Conjugate coefficient in the $k$th iteration} \nomenclature{$d_k :$}{Descent direction in the $k$th iteration}
\nomenclature{$\zeta_k :$}{Step size in the $k$th iteration}
\nomenclature{$\theta :$}{Noise level}
\nomenclature{$\Omega^c :$}{Complement of $\Omega$ in $\mathbb{R}^d$}
\nomenclature{$\mu :$}{Relative noise level}
\nomenclature{$E_k :$}{$L^2$ error in the $k$th iteration}
\nomenclature{$\lvert \, . \, \rvert_{H^s(\Omega) } :$}{Aronszajn-Slobodeckij seminorm}
\nomenclature{$\lVert \, . \, \rVert_{L^2(\Omega) } :$}{$L^2$ norm in $\Omega$}
\nomenclature{$\lVert \, . \, \rVert_{H^s(\Omega) } :$}{$H^s$ norm in $\Omega$}
\nomenclature{$H^s(\Omega) :$}{Fractional Sobolev space on $\Omega$}
\nomenclature{$H^s(\mathbb{R}^d) :$}{Fractional Sobolev space on $\mathbb{R}^d$}
\nomenclature{$\mathcal{R}_k :$}{Residual in the $k$th iteration}
\nomenclature{$\mathcal{T}_\nu :$}{Objective function}
\nomenclature{$\mathcal{I}_s :$}{Stopping index}
\nomenclature{$E_{\alpha, \beta} :$}{Mittag-Leffler function}
\nomenclature{$\widetilde{H}^s(\Omega) :$}{Set of functions in $H^{s}\left(\mathbb{R}^{d}\right)$ supported within $\overline{\Omega}$}
\nomenclature{$\overline{\Omega} :$}{Closure of $\Omega$}
\nomenclature{$C_{d,s} :$}{Normalization constant}
\nomenclature{\textbf{P.V} :}{Principal value}
\nomenclature{$\Delta t :$}{Time step}
\nomenclature{$\Delta x :$}{Space step}
\nomenclature{{\it rand}$(.)$ :}{Random function}
\nomenclature{$h :$}{Measured data}
\nomenclature{$\mathcal{B}_s$: }{Bilinear form associated to the space $\widetilde{H}^s(\Omega)$}
\nomenclature{$\chi_{(-1,1)}$: }{Characteristic function of the interval $(-1,1)$}

\printnomenclature

\section{Introduction}
Let $T>0$, $\Omega \subset \mathbb{R}^d$ and $d\geq 1$ be an open bounded domain. We consider the following initial-boundary value problem for a space-time fractional
diffusion equation with the homogeneous Dirichlet boundary and initial conditions:
\begin{equation}\label{space_time}
\left\{\begin{aligned}
\partial_t^\alpha u = -(-\Delta)^s u, &  & & (x, t) \in \Omega \times(0, T), \\
u & = 0, & &  (x, t) \in  \Omega^c \times(0, T),\\
u & = g, & &  (x, t) \in \Omega \times\{0\},
\end{aligned}\right.
\end{equation}
where $\Omega^c$ is the complement of $\Omega$ in $\mathbb{R}^d$, $g(x) \in L^2(\Omega)$ is an initial function ,  $\partial_{t}^{\alpha}$ denotes the left Caputo fractional derivative of order $0<\alpha<1$ (see, e.g., \cite{kilbas2006theory}), defined by
\begin{equation}\label{caputto}
\partial_{t}^{\alpha} v(t):=\ds \frac{1}{\Gamma(1-\alpha)} \int_{0}^{t} \frac{v^{\prime}(\tau)}{(t-\tau)^{\alpha}} \mathrm{d} \tau,
\end{equation}
 and $(-\Delta)^{s}$ is the non-local fractional Laplacian operator of order $s\in (0,\,1)$, defined by
\begin{equation}\label{laplace}
(-\Delta)^{s} u(x)=C_{d, s} \text { P.V. } \int_{\mathbb{R}^{d}} \frac{u(x)-u(y)}{|x-y|^{d+2 s}} d y.
\end{equation}
In \eqref{caputto} and \eqref{laplace}, $\Gamma(.)$ is the Gamma function,  $C_{d, s}$ is a normalization constant, defined by
$$
C_{d, s}=\frac{2^{2 s} s \Gamma\left(s+\frac{d}{2}\right)}{\pi^{d / 2} \Gamma(1-s)}, 
$$
and "P.V." is the principal value of the integral, defined by
$$
\text { P.V. } \int_{\mathbb{R}^{d}} \frac{v(x)-v(y)}{|x-y|^{d+2 s}} d y=\lim _{\varepsilon \downarrow 0} \int_{\left\{y \in \mathbb{R}^{d},|y-x|>\varepsilon\right\}} \frac{v(x)-v(y)}{|x-y|^{d+2 s}} d y. 
$$

\begin{remark}
The fractional Laplace operator is also defined by either Spectral/Fourier definition, or standard Laplacian, or  L$\acute{e}$vy process, or directional representation.   In this work, we use the fractional Laplace operator by singular integral definition as in \eqref{laplace}. We refer the readers to \cite{di2012hitchhiker} and \cite{lischke2020fractional} for more details regarding the Fractional Laplacian operator.
\end{remark}

\noindent For given $(\alpha, s) \in (0,1)^2$, and $g(x) \in L^2(\Omega)$, the problem \eqref{space_time} is called the direct(forward) problem. Like most direct problems of mathematical physics, the problem \eqref{space_time}  is well-posed, see for example, \cite{salih1} and \cite{nane}. The inverse problem here consists of determining the function $g(x)$, by means of the observation data (additional data) $u(x,T)=h(x), \, x \in \Omega.$ \\

The space-time fractional diffusion equation $\partial_t^\alpha u = -(-\Delta)^s u $ with $0 < \alpha, s  < 1$ is used to model anomalous diffusion \cite{MBSB}. Here, the fractional derivative in time is used to describe particle sticking and trapping phenomena, and the fractional space derivative is used to model long particle jumps. These two effects combined together produce a concentration profile with a sharper peak and heavier tails. In the fractional diffusion equations, the fractional time derivative with $0 < \alpha < 1$ is  used to model slow diffusion, and the exponent $s$  is related to the parameter specifying the large-time behavior of the waiting-time distribution function and particular cases of the infinitesimal generators of L$\acute{e}$vy stable diffusion processes and appear in anomalous diffusions in plasmas, flames propagation and chemical reactions in liquids, population dynamics, geophysical fluid dynamics, see \cite{11}, \cite{KU}, \cite{di2012hitchhiker} and some of the references cited therein. Recently, there has been a growing interest in inverse problems with fractional derivatives. These problems are physically and practically very important. For example in  \cite{y1}, the authors prove a uniqueness result in a one-dimensional time-fractional diffusion equation. The proof is based on the eigenfunction expansion of the weak solution to the initial value/boundary value problem and the Gel'fand–Levitan theory. In  \cite{rundell}, they study an inverse problem of recovering a spatially varying potential term in a one-dimensional time-fractional diffusion equation from the flux measurements taken at a single fixed time corresponding to a given set of input sources. The unique identifiability of the potential is shown for two cases, i.e. the flux at one end and the net flux, provided that the set of input sources forms a complete basis in $ L^2(0, 1)$. An algorithm of the Quasi-Newton type is proposed for the efficient and accurate reconstruction of the coefficient from finite data, and the injectivity of the Jacobian is discussed. In \cite{y2}, the authors consider a backward problem in time for a time-fractional partial differential equation in the one-dimensional case, which describes the diffusion process in porous media related to the continuous time random walk problem. The backward problem is ill-posed and they propose a regularizing scheme by the quasi-reversibility with fully theoretical analysis and test its numerical performance. With the help of the memory effect of the fractional derivative, it is found that the property of the initial status of the medium can be recovered in an efficient way. In \cite{y3}, for a time-fractional diffusion equation with source term, they discuss an inverse problem of determining a spatially varying function of the source by final overdetermining data. They prove that this inverse problem is well-posed in the Hadamard sense except for a discrete set of values of diffusion constants.  In \cite{y4}, the authors consider initial value/boundary value problems for a fractional diffusion-wave equation. For $\alpha \in (0,1)$, they prove stability in the backward problem in time,  the uniqueness in determining an initial value, the uniqueness of solution by the decay rate as $t \to \infty$ and stability in an inverse source problem of determining $t$ - dependent factor in the source by observation at one point over $(0, T)$. The authors study an inverse source problem for a fractional diffusion equation in \cite{yz1}. Under the assumption that the unknown source term is time-independent, an analytical solution can be deduced based on the method of the eigenfunction expansion. Then, the uniqueness of the inverse problem is proved by analytic continuation and Laplace transform. The paper \cite{ym5} deals with an inverse problem of simultaneously identifying the space-dependent diffusion coefficient and the fractional order in the one dimensional time-fractional diffusion equation with smooth initial functions by using boundary measurements. The uniqueness results for the inverse problem are proved on the basis of the inverse eigenvalue problem, and the Lipschitz continuity of the solution operator is established. In \cite{feng}, the authors study an inverse random source problem for the time-fractional diffusion equation, where the source is driven by a fractional Brownian motion. Given the random source, the direct problem is to study the stochastic time-fractional diffusion equation. The inverse problem is to determine the statistical properties of the source from the expectation and variance of the final time data. For the direct problem, the authors show that it is well-posed and has a unique mild solution under a certain conditions. For the inverse problem, the uniqueness is proved and the instability is characterized. The authors consider an inverse boundary value problem for diffusion equations with multiple fractional time derivatives  and they prove the uniqueness in determining the number of fractional time-derivative terms, the orders of the derivatives and spatially varying coefficients in \cite{oleg}.  However, there are  only a few papers involving both fractional Laplacian and fractional time derivatives. For instance, in \cite{salih1}, the authors study a nonlocal inverse problem related to the space-time fractional equation. The existence of the solution for the inverse problem is proved by using the quasi-solution method which is based on minimizing an error functional between the output data and the additional data. In this context, an input-output mapping is defined and continuity of the mapping is established. The uniqueness of the solution for the inverse problem is also proved by using eigenfunction expansion of the solution and some basic properties of fractional Laplacian. A numerical method based on discretization of the minimization problem, steepest descent method, and least squares approach is proposed for the solution of the inverse problem. A nonlocal inverse source problem for a one-dimensional space-time fractional diffusion equation is studied in \cite{salih2}. At first, they define and analyze the direct problem for the space-time fractional diffusion equation. Later, they define the inverse source problem. Furthermore, they set up an operator equation  and derive the relation between the solutions of the operator equation and the inverse source problem. They also prove some important properties of the operator. By using these properties and the analytic Fredholm theorem, it is proved that the inverse source problem is well posed, i.e.  the solution can be determined uniquely and depends continuously on additional data. In \cite{ulusoy1}, they consider a nonlocal inverse problem and show that the fractional exponents $\beta$, $\alpha$ and $\gamma$, where $\beta$ is the order of the time-fractional derivative and $\alpha$ and $\gamma$ are exponents of fractional Laplacian operator,  are determined uniquely by the data $u(x, T) = h(x), 0 \le t \leq T$. The existence of the solution for the inverse problem is proved using the quasi-solution method which is based on minimizing an error functional between the output data and the additional data. In this context, an input-output mapping is defined and its continuity is established. The uniqueness of the solution for the inverse problem is proved by means of eigenfunction expansion of the solution to the forward problem and some basic properties of fractional Laplacian. In \cite{ng1}, the authors study a diffusion equation of the Kirchhoff type with a conformable fractional derivative. The global existence and uniqueness of mild solutions are established. Some regularity results for the mild solution are also derived. This study can be regarded as a continuation of the series of works mentioned above on fractional inverse problems. \\

This paper is organized as follows: In section \ref{pre}, we present some preliminaries used throughout the paper. In section \ref{inv_pbm}, we formulate the direct and inverse problems, we prove that the considered inverse problem has a unique solution and the inverse problem is ill-posed,  we also reformulated the inverse problem  as a minimization problem. Section \ref{rec} is concerned with the proposed reconstruction approach. Some numerical simulations are presented in Section \ref{num}. The conclusions and possible directions on the problem are given in Section \ref{con}.

\section{Preliminaries}\label{pre}
In this section, we set some basic notations and recall some definitions and theorems.\\

\noindent By $L^2(\Omega)$, we denote the usual $L^2$-space with the inner product $(\, ,\, )$ and by $H_0^1(\Omega)$, $H^1(\Omega)$, etc we denote the usual Sobolev spaces. By $H^\alpha(0, T)$, we denote the fractional Sobolev space in time (see Adams \cite{adams1975sobolev}). Especially, for $s\in (0,1)$, the fractional Sobolev space $H^s(\Omega)$ is defined by 
$$
H^s(\Omega)=\left\{u \in L^2(\Omega):|u|_{H^s(\Omega)}:=\left(\iint_{\Omega^2} \frac{|u(x)-u(y)|^2}{|x-y|^{d+2 s}} d x d y\right)^{\frac{1}{2}}<\infty\right\}.
$$
Its natural norm is defined by
$$\|u\|_{H^s(\Omega)}:=\left(\| u\|_{L^2(\Omega)}^2+|u|_{H^s(\Omega)}^2\right)^{1/2}.$$
Moreover, we define the fractional space $\tilde{H}^{s}(\Omega)$ of order $s \in (0,1)$ as follows: 
$$
\widetilde{H}^{s}(\Omega):=\left\{u \in H^{s}\left(\mathbb{R}^{d}\right): \operatorname{supp} u \subset \overline{\Omega}\right\}.
$$
It may also be defined through interpolation as follows:
$$\widetilde{H}^{s}(\Omega):=\left[L^2(\Omega), H_0^1(\Omega) \right]_s.$$
The bilinear form associated to the space $\widetilde{H}^{s}(\Omega)$ is given by
$$
\mathcal{B}_s(u,v):=C_{d, s} \iint_{\left(\mathbb{R}^{d} \times \mathbb{R}^{d}\right) \backslash\left(\Omega^{c} \times \Omega^{c}\right)} \frac{(u(x)-u(y))(v(x)-v(y))}{|x-y|^{d+2 s}}  d x \, d y.
$$
\begin{proposition}[see \cite{dipierro2017nonlocal,du2013nonlocal}] Let $u, v : \mathbb{R}^{d} \longrightarrow \mathbb{R}$ be smooth functions, then
\begin{equation*} \label{integration_laplace}
\int_{\Omega} v(x)(-\Delta)^{s} u(x) d x  = \frac{\mathcal{B}_s(u,v)}{2} -\int_{\Omega^{c}} v(x) \mathcal{N}_{s} u(x) d x,
\end{equation*}
where $\mathcal{N}_{s}$ denotes the non-local Neumann operator associated to $(-\Delta)^{s}$ and defined as
$$
\mathcal{N}_{s} v(x):=C_{d, s} \int_{\Omega} \frac{v(x)-v(y)}{|x-y|^{d+2 s}} d y.
$$
The following proposition is concerned with a fractional integration by parts formula. It gives the relationship between  Caputo and Riemann-Liouville fractional derivatives.
\end{proposition}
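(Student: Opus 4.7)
The plan is to compute the left-hand side directly from the singular-integral definition \eqref{laplace} and then recognize the bilinear form $\mathcal{B}_s$ and the Neumann term $\mathcal{N}_s$ after a careful symmetrization in the $(x,y)$ variables. Throughout, I would treat the principal value by working on $\{|x-y|>\varepsilon\}$ and passing to the limit at the end; the smoothness of $u$ and $v$ guarantees the absolute integrability needed to apply Fubini at each stage (this is the routine analytic point I would not spell out in full).

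First I would write
\begin{equation*}
\int_{\Omega} v(x)(-\Delta)^{s}u(x)\,dx \;=\; C_{d,s}\int_{\Omega}\int_{\mathbb{R}^{d}} v(x)\,\frac{u(x)-u(y)}{|x-y|^{d+2s}}\,dy\,dx,
\end{equation*}
and split the inner integral according to $\mathbb{R}^{d}=\Omega\cup\Omega^{c}$. The $\Omega\times\Omega$ piece is symmetrized by swapping the dummy variables $x\leftrightarrow y$: averaging the original and the swapped integral replaces $v(x)$ by $\tfrac{1}{2}(v(x)-v(y))$ and $(u(x)-u(y))$ by $(u(x)-u(y))$, producing the symmetric integrand that will match the $\Omega\times\Omega$ contribution to $\mathcal{B}_s(u,v)$.

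Next I would expand $\mathcal{B}_s(u,v)$ over the disjoint pieces $\Omega\times\Omega$, $\Omega\times\Omega^{c}$, and $\Omega^{c}\times\Omega$, using the symmetry of the kernel to combine the last two into $2\iint_{\Omega\times\Omega^{c}}$. Subtracting $\tfrac{1}{2}\mathcal{B}_s(u,v)$ from the reorganized left-hand side cancels the $\Omega\times\Omega$ term exactly, and the remaining $\Omega\times\Omega^{c}$ contribution simplifies via the algebraic identity
\begin{equation*}
v(x)\bigl(u(x)-u(y)\bigr)-\bigl(u(x)-u(y)\bigr)\bigl(v(x)-v(y)\bigr)=v(y)\bigl(u(x)-u(y)\bigr).
\end{equation*}
A final swap $x\leftrightarrow y$ turns $\iint_{\Omega\times\Omega^{c}}v(y)\frac{u(x)-u(y)}{|x-y|^{d+2s}}\,dy\,dx$ into $-\int_{\Omega^{c}}v(x)\,\mathcal{N}_{s}u(x)\,dx$, where the minus sign is exactly what is required by the statement and the inner integral over $\Omega$ reproduces the definition of $\mathcal{N}_s u$.

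The main obstacle is purely technical: justifying Fubini and the exchange of the principal-value limit with the integration in $x$ over $\Omega$ and $\Omega^{c}$. With $u,v$ smooth (and in particular with enough decay or compact support, or under the standing hypothesis that the integrals in $\mathcal{B}_s$ and $\mathcal{N}_s$ converge), one can bound the singular integrand on the diagonal $\{|x-y|<\varepsilon\}$ using $|u(x)-u(y)|\lesssim |x-y|$ to obtain integrability of $|x-y|^{1-d-2s}$ near the diagonal for $s<1/2$, while for $s\ge 1/2$ one exploits the symmetric cancellation $(u(x)-u(y))(v(x)-v(y))\lesssim |x-y|^{2}$ that is already present in the bilinear form. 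Once these integrability and exchange issues are secured, the algebraic manipulation described above delivers the identity.
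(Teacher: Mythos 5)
Your argument is correct: the split of $\mathbb{R}^d$ into $\Omega\cup\Omega^c$, the symmetrization of the $\Omega\times\Omega$ block, the decomposition of $\mathcal{B}_s$ over $(\Omega\times\Omega)\cup(\Omega\times\Omega^c)\cup(\Omega^c\times\Omega)$, and the final relabelling that produces $-\int_{\Omega^c}v\,\mathcal{N}_s u$ all check out, including the signs. Note, however, that the paper does not prove this proposition at all — it is quoted as a known result from \cite{dipierro2017nonlocal,du2013nonlocal} — and your derivation is essentially the standard one given in those references, so there is nothing in the paper to compare against beyond the statement itself; your closing remarks on integrability near the diagonal (linear cancellation for $s<1/2$, quadratic cancellation in the symmetrized form for $s\ge 1/2$) and on the implicit decay hypotheses at infinity are exactly the technical caveats under which the cited sources establish the identity.
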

\begin{proposition}[see \cite{agrawal2007fractional,almeida2011necessary}] \label{integration1} Let $\alpha \in (0,1)$. Let $\mu_{1}$ and $\mu_{2}$ be two absolutely integrable functions. Then, we have
\begin{equation*}\label{integ1}
\int_{0}^{T} \mu_{2}(t) \partial_{t}^{\alpha} \mu_{1}(t) d t=\int_{0}^{T} \mu_{1}(t) D_{t}^{\alpha} \mu_{2}(t) d t+\left[\mu_{1}(t) J_{T^{-}}^{1-\alpha} \mu_{2}(t)\right]_{t=0}^{t=T},
\end{equation*}
where $J_{T^{-}}^{1-\alpha} \mu_{2}(t)$ denotes the right Riemann-Liouville fractional integral of $\mu_2(t)$ defined by
$$
J_{T^{-}}^{1-\alpha} \mu_2(t)=\frac{1}{\Gamma(1-\alpha)} \int_{t}^{T} \frac{\mu_2(\tau)}{(\tau-t)^{\alpha}} \mathrm{d} \tau,
$$
and $D_{t}^{\alpha} \mu_{2}(t)$ denotes the backward Riemann-Liouville fractional derivative  defined by
\begin{equation*}\label{riemann}
D_{t}^{\alpha} \mu_2(t):=-\left(J_{T^{-}}^{1-\alpha} \mu_2(t)\right)^{\prime}=\frac{-1}{\Gamma(1-\alpha)} \frac{d}{d t} \int_{t}^{T} \frac{\mu_2^{\prime}(\tau)}{(\tau-t)^{\alpha}} \mathrm{d} \tau.
\end{equation*}
\end{proposition}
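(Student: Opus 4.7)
The plan is to derive the identity by unfolding the Caputo derivative on the left-hand side, interchanging the order of integration to expose a right Riemann--Liouville integral, and then carrying out one ordinary integration by parts to produce both the boundary term and the backward Riemann--Liouville derivative.

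Concretely, I would first substitute the defining integral \eqref{caputto} for $\partial_t^\alpha \mu_1(t)$ into the left-hand side. This gives
\[
\int_0^T \mu_2(t)\, \partial_t^\alpha \mu_1(t)\, dt
= \frac{1}{\Gamma(1-\alpha)} \int_0^T \mu_2(t) \int_0^t \frac{\mu_1'(\tau)}{(t-\tau)^\alpha}\, d\tau\, dt .
\]
The second step is to apply Fubini's theorem to swap the order of integration over the triangle $\{0 \le \tau \le t \le T\}$, turning the expression into
\[
\frac{1}{\Gamma(1-\alpha)} \int_0^T \mu_1'(\tau) \int_\tau^T \frac{\mu_2(t)}{(t-\tau)^\alpha}\, dt\, d\tau
= \int_0^T \mu_1'(\tau)\, J_{T^-}^{1-\alpha}\mu_2(\tau)\, d\tau ,
\]
where the inner $t$-integral has been recognized as $\Gamma(1-\alpha)\, J_{T^-}^{1-\alpha}\mu_2(\tau)$ by the very definition of the right Riemann--Liouville integral given in Proposition~\ref{integration1}.

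The third step is a classical integration by parts in $\tau$, taking $u=J_{T^-}^{1-\alpha}\mu_2(\tau)$ and $dv=\mu_1'(\tau)\,d\tau$:
\[
\int_0^T \mu_1'(\tau)\, J_{T^-}^{1-\alpha}\mu_2(\tau)\, d\tau
= \bigl[\mu_1(\tau)\, J_{T^-}^{1-\alpha}\mu_2(\tau)\bigr]_{\tau=0}^{\tau=T}
- \int_0^T \mu_1(\tau)\, \bigl(J_{T^-}^{1-\alpha}\mu_2(\tau)\bigr)'\, d\tau .
\]
Finally, using the identity $D_t^\alpha \mu_2(\tau) = -\bigl(J_{T^-}^{1-\alpha}\mu_2(\tau)\bigr)'$ stated in the proposition, the last integral becomes $\int_0^T \mu_1(\tau)\, D_t^\alpha \mu_2(\tau)\, d\tau$, and the claimed identity follows.

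The only delicate point is Step 2: the application of Fubini's theorem requires that the double integral be absolutely convergent, which involves the singular kernel $(t-\tau)^{-\alpha}$. Since $0<\alpha<1$ this kernel is locally integrable in $\tau$ uniformly in $t$, so absolute integrability of $\mu_1'$ and $\mu_2$ (together with boundedness of $\mu_2$, or a density argument from smooth functions) suffices; the differentiation of $J_{T^-}^{1-\alpha}\mu_2$ in Step 3 is similarly well-defined under these hypotheses. I expect this integrability/regularity verification to be the main technical obstacle; the algebraic manipulations themselves are straightforward once Fubini is justified.
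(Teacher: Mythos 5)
Your argument is correct and is exactly the standard derivation of this formula (unfold the Caputo derivative, apply Fubini over the triangle $\{0\le\tau\le t\le T\}$, recognize the inner integral as $\Gamma(1-\alpha)\,J_{T^-}^{1-\alpha}\mu_2(\tau)$, and integrate by parts once); the paper itself offers no proof, deferring to \cite{agrawal2007fractional,almeida2011necessary}, where this same computation appears. Your closing caveat is the right one to flag: as stated, mere absolute integrability of $\mu_1$ and $\mu_2$ is too weak even for $\partial_t^\alpha\mu_1$ to be defined, so one should assume $\mu_1$ absolutely continuous and enough regularity on $\mu_2$ (e.g.\ $\mu_2\in C^1$, or a density argument) to justify Fubini and the differentiability of $J_{T^-}^{1-\alpha}\mu_2$ in the final integration by parts.
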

\noindent Let us introduce the Mittag-Leffler function which is defined on the complex set $\mathbb{C}$ by 
$$
E_{\alpha, \beta}(z):=\sum_{k=0}^{\infty} \frac{z^{k}}{\Gamma(\alpha k+\beta)}, \quad z \in \mathbb{C},
$$
where $\alpha>0$ and $\beta \in \mathbb{R}$ are arbitrary constants. Notice that this complex function depends on two parameters. In particular, it
generalizes the exponentials in view of the identity $E_{1,1}(z)=e^z$ for all $z\in \mathbb{C}$. Moreover, it plays a central role in fractional diffusion equations.  The
following results of this family of functions are useful to derive the solution representation of the direct problem \eqref{space_time}. 
\begin{lemma}\cite[Theorem 4.3]{diethelm2010analysis}\label{mittag}
Let $\alpha>0$ and $\lambda>0$, then we have
$$
\partial_{t}^{\alpha} E_{\alpha, 1}\left(-\lambda t^{\alpha}\right)=-\lambda E_{\alpha, 1}\left(-\lambda t^{\alpha}\right), \quad t>0.
$$
Moreover, the following identity holds for integer-order differentiation:
$$
\frac{d}{dt} E_{\alpha, 1}\left(-\lambda t^{\alpha}\right)=-\lambda\, t^{\alpha-1} E_{\alpha, \alpha}\left(-\lambda t^{\alpha}\right), \quad t>0.
$$
\end{lemma}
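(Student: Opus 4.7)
The plan is to prove both identities directly from the series definition
\[
E_{\alpha,1}(-\lambda t^\alpha)=\sum_{k=0}^{\infty}\frac{(-\lambda)^{k}\,t^{\alpha k}}{\Gamma(\alpha k+1)},
\]
by differentiating term by term and then reindexing. I would first record the standard growth estimate $|(-\lambda t^{\alpha})^{k}/\Gamma(\alpha k+1)|\le C_{T}\,r^{k}/k!$ valid on $[0,T]$ for some $r=r(\lambda,T,\alpha)$, which follows from the asymptotics of $\Gamma(\alpha k+1)$ via Stirling (or from the entire-function character of $E_{\alpha,\beta}$). This uniform convergence on compact subsets of $(0,T]$ is what will legitimize swapping derivatives and the infinite sum in both parts.

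Start with the ordinary derivative, which is the easier identity and will serve as a warm-up for the bookkeeping. Differentiating term by term, the $k=0$ term drops out and
\[
\frac{d}{dt}E_{\alpha,1}(-\lambda t^\alpha)=\sum_{k=1}^{\infty}\frac{(-\lambda)^{k}\,\alpha k\,t^{\alpha k-1}}{\Gamma(\alpha k+1)}
=\sum_{k=1}^{\infty}\frac{(-\lambda)^{k}\,t^{\alpha k-1}}{\Gamma(\alpha k)},
\]
using $\Gamma(\alpha k+1)=\alpha k\,\Gamma(\alpha k)$. Setting $j=k-1$ and factoring out $-\lambda t^{\alpha-1}$ produces
\[
-\lambda\,t^{\alpha-1}\sum_{j=0}^{\infty}\frac{(-\lambda t^{\alpha})^{j}}{\Gamma(\alpha j+\alpha)}=-\lambda\,t^{\alpha-1}E_{\alpha,\alpha}(-\lambda t^{\alpha}),
\]
which is the second claim.

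For the Caputo identity, the idea is identical but uses the elementary fractional differentiation formula
\[
\partial_{t}^{\alpha}t^{\beta}=\frac{\Gamma(\beta+1)}{\Gamma(\beta-\alpha+1)}\,t^{\beta-\alpha},\qquad \beta>0,
\]
together with $\partial_{t}^{\alpha}(\text{constant})=0$, which kills the $k=0$ term. Applying this to $t^{\alpha k}$ for each $k\ge 1$, the ratio of Gammas simplifies beautifully:
\[
\partial_{t}^{\alpha}E_{\alpha,1}(-\lambda t^{\alpha})=\sum_{k=1}^{\infty}\frac{(-\lambda)^{k}}{\Gamma(\alpha k+1)}\cdot\frac{\Gamma(\alpha k+1)}{\Gamma(\alpha(k-1)+1)}\,t^{\alpha(k-1)}=\sum_{k=1}^{\infty}\frac{(-\lambda)^{k}\,t^{\alpha(k-1)}}{\Gamma(\alpha(k-1)+1)}.
\]
Reindexing $j=k-1$ and pulling out $-\lambda$ recovers exactly $-\lambda E_{\alpha,1}(-\lambda t^{\alpha})$, giving the first identity.

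The one genuine technical point, and the only place where care is really needed, is justifying that $\partial_{t}^{\alpha}$ commutes with the infinite sum. Since $\partial_{t}^{\alpha}$ involves the integral $\int_{0}^{t}(t-\tau)^{-\alpha}v'(\tau)\,d\tau$, I would argue as follows: the series for $E_{\alpha,1}(-\lambda t^\alpha)$ and its term-by-term derivative series both converge uniformly on $[0,T]$ for every $T>0$ (by the Stirling bound above), so the classical derivative commutes with the sum; then dominated convergence applied to the weakly singular integral defining $\partial_{t}^{\alpha}$ lets us pass the sum through as well. This is the step I expect to require the most caution to state cleanly; the rest is algebraic manipulation with the Gamma function.
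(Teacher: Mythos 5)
This lemma is not proved in the paper at all: it is quoted verbatim from \cite[Theorem 4.3]{diethelm2010analysis} (together with \cite{podlubny1999fractional} for the companion estimate), so there is no in-paper argument to compare against. Your proposal supplies the standard textbook derivation — term-by-term differentiation of the power series, the Caputo power rule $\partial_t^{\alpha}t^{\beta}=\frac{\Gamma(\beta+1)}{\Gamma(\beta-\alpha+1)}t^{\beta-\alpha}$ with $\partial_t^{\alpha}(\mathrm{const})=0$, and a reindexing — and both computations are correct; this is essentially the proof in Diethelm's book.

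Two small imprecisions in the justification step are worth fixing. First, the bound $\bigl|(-\lambda t^{\alpha})^{k}/\Gamma(\alpha k+1)\bigr|\le C_{T}r^{k}/k!$ is false for $0<\alpha<1$, since $\Gamma(\alpha k+1)$ grows only like $(k!)^{\alpha}$ up to geometric factors; what is true (and suffices) is that the $k$-th term is eventually dominated by $C_{T}\rho^{k}$ for any $\rho\in(0,1)$, because the ratio of consecutive terms tends to $0$. Second, the term-by-term derivative series does not converge uniformly on all of $[0,T]$: its $k=1$ term behaves like $t^{\alpha-1}$ and blows up at $t=0$, so uniform convergence holds only on $[\delta,T]$. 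This does not break the argument — the correct domination is $\bigl|\frac{d}{d\tau}S_{N}(\tau)\bigr|\le C_{T}\,\tau^{\alpha-1}$ uniformly in $N$, and $\tau^{\alpha-1}(t-\tau)^{-\alpha}$ is integrable on $(0,t)$, which is exactly the dominated-convergence step you invoke — but the statement should be phrased in terms of this integrable majorant rather than uniform convergence up to $t=0$. With those two sentences repaired, the proof is complete.
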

\begin{lemma}\cite[Theorem 1.6]{podlubny1999fractional}\label{mittag2} Let $0<\alpha<1$ and   $\pi \alpha / 2<\mu<\pi \alpha$. Then there exists a constant $C_{0}=C_{0}(\alpha, \mu)>0$ such that
$$
\left|E_{\alpha, 1}(z)\right| \leqslant \frac{C_{0}}{1+|z|}, \quad \mu \leqslant|\arg (z)| \leqslant \pi.
$$
\end{lemma}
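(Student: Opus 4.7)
Although the estimate is standard (it is Theorem 1.6 of Podlubny and amounts to extracting the first term of the asymptotic expansion of $E_{\alpha,1}$ at infinity in the sector $\mu\leq|\arg z|\leq\pi$), the route I would follow is a Hankel-type contour representation of $E_{\alpha,1}$ combined with a split of $|z|$ into a bounded regime and a large regime. I would begin from the representation
$$E_{\alpha,1}(z)\;=\;\frac{1}{2\pi i}\int_{\mathcal{H}}\frac{w^{\alpha-1}\,e^{w}}{w^{\alpha}-z}\,dw,$$
obtained by inserting Hankel's formula for $1/\Gamma(\alpha k+1)$ into the series $\sum_{k\geq 0}z^{k}/\Gamma(\alpha k+1)$ and summing the resulting geometric series along a contour $\mathcal{H}$ that encloses every singularity in $w$ of the integrand.

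Next I would choose $\mathcal{H}=\mathcal{H}(\theta)$ consisting of two rays $\arg w=\pm\theta$ joined by a small arc around the origin, with $\theta$ picked in the interval $(\pi/2,\,\mu/\alpha)$. Such a $\theta$ exists precisely because the hypothesis $\pi\alpha/2<\mu<\pi\alpha$ gives $\pi/2<\mu/\alpha<\pi$. The two constraints on $\theta$ are the decisive ingredients. On the one hand $\theta>\pi/2$ forces $\cos\theta<0$, so $|e^{w}|=e^{|w|\cos\theta}$ decays exponentially along the rays and the integral is absolutely convergent. On the other hand $\alpha\theta<\mu$ means that $w^{\alpha}$ is confined to a sector strictly separated from the sector where $z$ lives, so there is a constant $c=c(\alpha,\theta,\mu)>0$ with
$$|w^{\alpha}-z|\;\geq\; c\bigl(|w|^{\alpha}+|z|\bigr)\qquad\text{for every }w\in\mathcal{H}(\theta).$$

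With this separation in hand, I would estimate $|E_{\alpha,1}(z)|$ by splitting $\mathcal{H}(\theta)$ into the piece $\{|w|^{\alpha}\leq|z|\}$ (using $|w^{\alpha}-z|\geq c|z|$) and the piece $\{|w|^{\alpha}>|z|\}$ (using $|w^{\alpha}-z|\geq c|w|^{\alpha}$). The exponential decay on the rays makes both contributions $O(|z|^{-1})$ as $|z|\to\infty$, producing $|E_{\alpha,1}(z)|\leq C/|z|$ on $\{|z|\geq 1\}\cap\{\mu\leq|\arg z|\leq\pi\}$. For $|z|\leq 1$ the bound is immediate since $E_{\alpha,1}$ is an entire function, hence uniformly bounded on the compact disc. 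Merging the two estimates and absorbing constants into a single $C_{0}=C_{0}(\alpha,\mu)$ yields the desired inequality.

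The main obstacle, and the reason the hypothesis on $\mu$ enters so essentially, is the simultaneous need for decay of $e^{w}$ on the contour and angular separation of $w^{\alpha}$ from $z$: the first forces $\theta>\pi/2$ while the second forces $\theta<\mu/\alpha$, and only the assumption $\pi\alpha/2<\mu<\pi\alpha$ makes both achievable at once. Once the contour has been fixed, the pointwise bounds on the two pieces of the integral are a routine computation.
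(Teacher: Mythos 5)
The paper offers no proof of this lemma---it is quoted directly from the cited reference (Podlubny, Theorem 1.6)---and your outline reproduces the standard Hankel-contour argument of that reference correctly: the choice $\theta\in(\pi/2,\mu/\alpha)$, made possible precisely by $\pi\alpha/2<\mu<\pi\alpha$, simultaneously yields exponential decay of $e^{w}$ on the rays and the angular separation $|w^{\alpha}-z|\geq c(|w|^{\alpha}+|z|)$, after which the $O(1/|z|)$ bound for $|z|\geq 1$ and boundedness of the entire function on $|z|\leq 1$ give the claim. The only imprecision is the remark that $\mathcal{H}$ ``encloses every singularity'' of the integrand: the pole at $w=z^{1/\alpha}$ must in fact lie \emph{outside} the contour for the residue-free representation to hold, which is exactly what your condition $\alpha\theta<\mu\leq|\arg z|$ guarantees, so the argument remains self-consistent.
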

\section{The direct and the inverse problems}\label{inv_pbm} 
In this section, we formulate the direct and inverse problems, then we prove that the considered inverse problem has a unique solution. We also prove that the inverse problem is ill-posed. First of all, we need to define a solution formula for the direct problem \eqref{space_time}. By using the eigenfunction expansion method, following  \cite{acosta2019finite},  \cite{nane}, \cite{luchko}, \cite{y4}, we get the following useful formula for the weak solution of the direct problem \eqref{space_time}:  

\begin{equation}\label{expansion}
\small
  u(x,t):= \sum_{k=1}^{\infty}g_k\,E_{\alpha, 1}\left(-\lambda_k t^{\alpha}\right) \, \varphi_k(x), \quad \forall (x,t) \in \Omega \times (0,T),  
\end{equation}
where $g_k=(g,\varphi_k)$ and the family $\{(\varphi_k, \lambda_k)\}_{k\geq 1}$ represents the eigenpairs of the fractional Laplace operator $(-\Delta)^s$ on $\Omega$ with homogeneous Dirichlet condition, i.e.
\begin{equation*}
\left\{\begin{aligned}
 (-\Delta)^s \varphi_k & = \lambda_k \varphi_k & & \text { in } \Omega, \\
\varphi_k & = 0 & & \text { in }  \Omega^c.
\end{aligned}\right.
\end{equation*}
In addition, it is well-known that the fractional Laplacian operator has a sequence of eigenvalues satisfying
$$
0<\lambda_1 \leq \lambda_2 \leq \cdots \leq \lambda_k \leq \cdots \text { and } \lim _{k \rightarrow+\infty} \lambda_k=+\infty.
$$
Besides, the set of eigenfunctions $\left\{\varphi_k\right\}_{k=1}^{\infty}$ forms an orthonormal basis of $L^2(\Omega)$.

\begin{remark} Unlike the classical Laplacian, it is proved in \cite{grubb2015spectral,ros2014local} that eigenfunctions of the fractional Laplacian
are in general non-smooth. More precisely, for each $k$, $k =  1, 2, 3, \cdots $,  the eigenfunction $\varphi_k$ belongs to $H^{s+1 / 2-\epsilon}\left(\mathbb{R}^d\right)$ where  $\epsilon>0$ is an arbitrary small real number.
\end{remark}
Following the same idea given in \cite{y4,acosta2019finite} and based on the formula \eqref{expansion} and Lemmas \ref{mittag} and \ref{mittag2}, we have the following theorem that states the regularity result for the solution of the problem \eqref{space_time}:
\begin{theorem} Let $(\alpha, s) \in (0,1)^2$, and $g \in L^{2}(\Omega)$ be given. Then, problem \eqref{space_time} admits a unique weak solution
$u \in \mathcal{H}^{\alpha,s}(0,T;\Omega)$. Furthermore, there exists a constant $C>0$ such that
\begin{equation*}\label{est0}
\left\|\partial_{t}^{\alpha} u\right\|_{C([0, T] ; L^2(\Omega))}\leq C\|g\|_{L^{2}(\Omega)},
\end{equation*}
and
\begin{equation*}\label{est1}
\left\|\partial_{t}^{\alpha} u(.,t)\right\|_{L^2(\Omega)}+\left\|u(.,t)\right\|_{H^{s+\gamma}(\Omega)}\leq C\, t^{-\alpha}\,\|g\|_{L^{2}(\Omega)},
\end{equation*}
\end{theorem}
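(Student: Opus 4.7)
The plan is to construct the solution directly through the eigenfunction expansion formula \eqref{expansion}, verify that it lies in $\mathcal{H}^{\alpha,s}(0,T;\Omega)$ by termwise analysis, prove the stated estimates via Parseval combined with the decay bounds on the Mittag-Leffler function, and finally obtain uniqueness from the completeness of the eigenbasis.

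First I would establish basic convergence. Since $\{\varphi_k\}$ is an orthonormal basis of $L^2(\Omega)$ and $g \in L^2(\Omega)$, Parseval gives $\sum_k g_k^2 = \|g\|_{L^2(\Omega)}^2 < \infty$. Applying Lemma \ref{mittag2} with $z = -\lambda_k t^\alpha$ (which lies on the negative real axis, so $|\arg z| = \pi$) yields $|E_{\alpha,1}(-\lambda_k t^\alpha)| \leq C_0/(1+\lambda_k t^\alpha) \leq C_0$. Hence the series $\sum_k g_k E_{\alpha,1}(-\lambda_k t^\alpha)\varphi_k$ converges in $L^2(\Omega)$ uniformly in $t\in[0,T]$, so $u \in C([0,T];L^2(\Omega))$ with $\|u(\cdot,t)\|_{L^2(\Omega)} \leq C_0\|g\|_{L^2(\Omega)}$.

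Next I would handle the derivative estimates. By Lemma \ref{mittag}, termwise $\partial_t^\alpha u = -\sum_k g_k \lambda_k E_{\alpha,1}(-\lambda_k t^\alpha)\varphi_k$. The elementary inequality $\lambda_k/(1+\lambda_k t^\alpha) \leq t^{-\alpha}$ together with Lemma \ref{mittag2} gives the pointwise bound $\lambda_k|E_{\alpha,1}(-\lambda_k t^\alpha)| \leq C_0 t^{-\alpha}$. Parseval then yields $\|\partial_t^\alpha u(\cdot,t)\|_{L^2(\Omega)} \leq C_0 t^{-\alpha}\|g\|_{L^2(\Omega)}$. Since $\partial_t^\alpha u = -(-\Delta)^s u$ in the expansion, the same summability shows $u(\cdot,t)$ lies in the domain of $(-\Delta)^s$; matching the regularity statement \cite{grubb2015spectral,ros2014local} that eigenfunctions belong to $H^{s+1/2-\epsilon}$ identifies this domain with $H^{s+\gamma}(\Omega)$ for an appropriate $\gamma$, yielding the $H^{s+\gamma}$ estimate. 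For the $C([0,T];L^2(\Omega))$ bound on $\partial_t^\alpha u$, I would refine the argument: write the Mittag-Leffler factor using its integral representation so that the apparent $t=0$ singularity is absorbed by the structure of the Caputo operator, which is the classical way to see that $\partial_t^\alpha u$ remains $L^2$-bounded uniformly on $[0,T]$ for $g\in L^2(\Omega)$. Uniqueness then follows by testing the equation for the difference of two solutions against each $\varphi_k$: this produces a scalar fractional ODE with zero initial data whose only solution is the zero function, so $(u_1-u_2,\varphi_k)=0$ for every $k$, and completeness of $\{\varphi_k\}$ gives $u_1 \equiv u_2$.

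The main obstacle I expect is the $H^{s+\gamma}(\Omega)$ estimate. Because eigenfunctions of the fractional Laplacian are not smooth (cf.\ the remark preceding the theorem), one cannot directly identify $\sum_k g_k^2 \lambda_k^{2\sigma}$ with an $H^{2s\sigma}$ norm as in the classical case; instead one must lean on the interpolation characterization $\widetilde{H}^s(\Omega)=[L^2(\Omega),H_0^1(\Omega)]_s$, the spectral regularity from \cite{grubb2015spectral,ros2014local}, and a Sobolev embedding that costs an $\epsilon$ in the exponent, which ultimately pins down the admissible $\gamma$. A subsidiary difficulty is that the space $\mathcal{H}^{\alpha,s}(0,T;\Omega)$ is not defined in the excerpt: before the proof can be completed rigorously, one must specify that $u\in\mathcal{H}^{\alpha,s}$ means $u(\cdot,t)\in\widetilde{H}^s(\Omega)$ with $\partial_t^\alpha u\in L^2$, and then each of the steps above combines to give membership in exactly that space.
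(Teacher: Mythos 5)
Your overall strategy coincides with the one the paper intends: the paper gives no written proof of this theorem at all, asserting only that it follows from formula \eqref{expansion} and Lemmas \ref{mittag} and \ref{mittag2} "following the same idea" as in the cited references, and the eigenfunction-expansion, Parseval, Mittag--Leffler-decay argument you outline is exactly that standard proof. Your treatment of $u\in C([0,T];L^2(\Omega))$ via uniform convergence of the tails, of the bound $\|\partial_t^\alpha u(\cdot,t)\|_{L^2(\Omega)}\le C\,t^{-\alpha}\|g\|_{L^2(\Omega)}$ via $\lambda_k/(1+\lambda_k t^\alpha)\le t^{-\alpha}$, and of uniqueness via the scalar fractional ODE for each mode are correct and are what the cited sources do.

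Two steps do not go through as written. First, your proposed proof of the uniform bound $\|\partial_t^\alpha u\|_{C([0,T];L^2(\Omega))}\le C\|g\|_{L^2(\Omega)}$ --- that "the apparent $t=0$ singularity is absorbed by the structure of the Caputo operator" --- cannot work for general $g\in L^2(\Omega)$: taking $g=\varphi_k$ gives formally $\partial_t^\alpha u(\cdot,0)=-\lambda_k\varphi_k$, so $\sup_{t\in[0,T]}\|\partial_t^\alpha u(\cdot,t)\|_{L^2(\Omega)}$ grows like $\lambda_k$ while $\|g\|_{L^2(\Omega)}=1$; no integral representation rescues this. In the reference this estimate is proved under the stronger hypothesis that the initial datum lies in the domain of the spatial operator, and the analogous hypothesis here (roughly $(-\Delta)^s g\in L^2(\Omega)$) is what you would need; with $g\in L^2(\Omega)$ only, the first estimate is not provable and you should flag that rather than claim it. Second, for the $H^{s+\gamma}$ bound the right mechanism is not the termwise regularity of the eigenfunctions: knowing each $\varphi_k\in H^{s+1/2-\epsilon}(\mathbb{R}^d)$ gives no control of the norm of the sum, since the $H^{s+1/2-\epsilon}$ norms of the $\varphi_k$ grow with $k$. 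What is needed is the elliptic regularity of the Dirichlet fractional Laplacian: the solution operator $(-\Delta)^{-s}$ maps $L^2(\Omega)$ into $\widetilde{H}^{s}(\Omega)\cap H^{s+\gamma}(\Omega)$ with $\gamma=\min\{s,1/2-\varepsilon\}$ (Grubb, Ros-Oton--Serra, as used by Acosta et al.), whence $\|u(\cdot,t)\|_{H^{s+\gamma}(\Omega)}\le C\|(-\Delta)^s u(\cdot,t)\|_{L^2(\Omega)}=C\|\partial_t^\alpha u(\cdot,t)\|_{L^2(\Omega)}\le C\,t^{-\alpha}\|g\|_{L^2(\Omega)}$. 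With that substitution your argument closes, modulo the caveat on the first estimate.
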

where
$$\mathcal{H}^{\alpha,s}(0,T;\Omega):=\Big\{v \in  C([0, T] ; L^2(\Omega)) \cap C((0, T] ; \widetilde{H}^{s}(\Omega) \cap H^{s+\gamma}(\Omega))\; \text{such that}\;\partial_t^\alpha v \in C((0,T]; L^2(\Omega))\Big\},$$
$\gamma:=\min \{s, 1 / 2-\varepsilon\}$ and $\varepsilon>0$ arbitrarily small. \\

The inverse problem that we consider consists of identifying the initial value $g$, from noise measurement of the final time solution. More precisely, the considered inverse problem can be formulated as finding $g^\star \in L^2(\Omega)$ and $u^\star \in \mathcal{H}^{\alpha,s}(0,T;\Omega)$ in the following problem:
\begin{equation}\label{inv}
\left\{\begin{aligned}
\partial_t^\alpha u^\star = - (-\Delta)^s u^\star, & & & (x, t) \in  \Omega \times(0, T), \\
u^\star & = 0,& & (x, t) \in  \Omega^c \times(0, T), \\
u^\star & = g^\star ,& & (x, t) \in  \Omega \times\{0\},\\
u^\star & = h,& & (x, t) \in  \Omega \times\{T\}.
\end{aligned}\right.
\end{equation}
The following proposition is concerned with the series representation of the solution of problem \eqref{inv} using the eigenfunction expansion method.
\begin{proposition}\label{representation} Let $h \in L^2(\Omega)$ be a given function. Then, the solution of the problem \eqref{inv} can be represented as follows
\begin{equation*}\label{sol_inv}
g^\star(x)= \sum_{k=1}^{\infty} \frac{\big( h, \varphi_k\big) }{ E_{\alpha,1}(-\lambda_k T^{\alpha})}\, \varphi_k(x),
\end{equation*}
and
\begin{equation*}\label{over_inv}
u^\star(x,t)= \sum_{k=1}^{\infty}\frac{\big( h, \varphi_k\big)}{ E_{\alpha,1}(-\lambda_k T^{\alpha})}\,E_{\alpha,1}(-\lambda_k t^{\alpha}) \,\varphi_k(x).
\end{equation*}
\end{proposition}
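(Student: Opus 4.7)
The plan is to deduce the formula for $g^\star$ and $u^\star$ by matching the general representation of the forward problem with the extra observation $u^\star(\cdot,T)=h$, so the whole argument rests on the Fourier decomposition in the orthonormal basis $\{\varphi_k\}_{k\geq 1}$ of $L^2(\Omega)$.

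First I would apply the forward solution formula \eqref{expansion} with the unknown initial datum $g^\star$. Since any admissible $(u^\star,g^\star)$ for \eqref{inv} solves the first three lines of \eqref{inv}, which coincide with \eqref{space_time} driven by $g^\star$, we obtain
\begin{equation*}
u^\star(x,t)=\sum_{k=1}^\infty g^\star_k\, E_{\alpha,1}\!\left(-\lambda_k t^\alpha\right)\varphi_k(x),\qquad g^\star_k:=(g^\star,\varphi_k).
\end{equation*}
Evaluating this identity at $t=T$ and using the fourth line of \eqref{inv} gives
\begin{equation*}
h(x)=\sum_{k=1}^\infty g^\star_k\, E_{\alpha,1}\!\left(-\lambda_k T^\alpha\right)\varphi_k(x).
\end{equation*}
Expanding $h\in L^2(\Omega)$ in the orthonormal basis $\{\varphi_k\}$ as $h=\sum_k(h,\varphi_k)\varphi_k$ and equating the Fourier coefficients yields $g^\star_k\,E_{\alpha,1}(-\lambda_k T^\alpha)=(h,\varphi_k)$ for every $k$.

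Next I would argue that the scalar $E_{\alpha,1}(-\lambda_k T^\alpha)$ is nonzero for every $k$. Since $0<\alpha<1$ and $\lambda_k>0$, the function $t\mapsto E_{\alpha,1}(-t)$ is completely monotone on $[0,\infty)$ (equivalently, strictly positive), which can be justified either via Lemma \ref{mittag2} together with the series definition or directly cited from \cite{podlubny1999fractional}. This permits the division $g^\star_k=(h,\varphi_k)/E_{\alpha,1}(-\lambda_k T^\alpha)$, and the Parseval reconstruction $g^\star(x)=\sum_k g^\star_k\varphi_k(x)$ gives the announced formula. Plugging these coefficients back into the forward representation then yields the stated expression for $u^\star(x,t)$.

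The main subtlety, rather than an obstacle within this proposition, is that the derivation is purely formal: the series for $g^\star$ converges in $L^2(\Omega)$ only if $\sum_k|(h,\varphi_k)|^2/|E_{\alpha,1}(-\lambda_k T^\alpha)|^2<\infty$, and by Lemma \ref{mittag2} the denominators behave like $\lambda_k^{-1}\to 0$. Hence arbitrary $h\in L^2(\Omega)$ need not produce an admissible $g^\star\in L^2(\Omega)$; this is precisely the source of the ill-posedness addressed in the subsequent section. For the statement of Proposition \ref{representation}, however, it suffices to observe that \emph{any} solution of \eqref{inv} must obey the displayed identities, which is exactly what the argument above establishes.
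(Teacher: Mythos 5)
Your proposal is correct and follows essentially the same route as the paper's proof: apply the forward expansion \eqref{expansion} with initial datum $g^\star$, evaluate at $t=T$, project onto the orthonormal basis $\{\varphi_k\}$ to obtain $(h,\varphi_k)=(g^\star,\varphi_k)\,E_{\alpha,1}(-\lambda_k T^\alpha)$, use the positivity of $E_{\alpha,1}(-\lambda_k T^\alpha)$ to divide, and substitute back. Your closing remark on the $L^2$-convergence of the series for $g^\star$ is a useful observation about the ill-posedness but does not change the argument.
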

\begin{proof}
From \eqref{expansion}, the solution of 
    \begin{equation*}
\left\{\begin{aligned}
\partial_t^\alpha u^\star = - (-\Delta)^s u^\star,& & & (x, t) \in  \Omega \times(0, T), \\
u^\star & = 0,& & (x, t) \in  \Omega^c \times(0, T), \\
u^\star & = g^\star, & & (x, t) \in  \Omega \times\{0\},
\end{aligned}\right.
\end{equation*}
is given by 
\begin{equation}\label{ouver_inv0}
u^\star(x,t):= \sum_{k=1}^{\infty}(g^\star, \varphi_k)\,E_{\alpha, 1}\left(-\lambda_k t^{\alpha}\right)\, \varphi_k(x).
\end{equation}
By taking $t=T$ and using the fact that $u^\star(.,T) =h$ in $\Omega$, we get
\begin{equation*}
h= \sum_{k=1}^{\infty}(g^\star, \varphi_k)\,E_{\alpha, 1}\left(-\lambda_k T^{\alpha}\right)\, \varphi_k(x).
\end{equation*}
Multiplying both sides of the above equation by $\varphi_k$ and integrating with respect to $x$, we get 
$$\big( h, \varphi_k\big) = \big( g^\star, \varphi_k\big)\,E_{\alpha,1}(-\lambda_k T^{\alpha}).$$
Since $E_{\alpha,1}(-\lambda_kT^{\alpha})>0$, it follows:
\begin{equation}\label{sssol_inv}
\big( g^\star, \varphi_k\big) = \frac{\big( h, \varphi_k\big)}{E_{\alpha,1}(-\lambda_k T^{\alpha})}.
\end{equation}
Hence $g^\star$ is given by
\begin{equation*}
g^\star(x)= \sum_{k=1}^{\infty} \frac{\big( h, \varphi_k\big) }{ E_{\alpha,1}(-\lambda_k T^{\alpha})}\, \varphi_k(x).
\end{equation*}
Substituting \eqref{sssol_inv} into \eqref{ouver_inv0} yields 
\begin{equation*}
u^\star(x,t)= \sum_{k=1}^{\infty}\frac{\big( h, \varphi_k\big) }{ E_{\alpha,1}(-\lambda_k T^{\alpha})}\,E_{\alpha,1}(-\lambda_k t^{\alpha}) \,\varphi_k(x).
\end{equation*}
The proof is complete. 
\end{proof}
Now we prove a uniqueness theorem. Let  $u_1$ and $u_2$ be the solutions of the problem \eqref{space_time} correspond to the initial values $g_j \in L^2(\Omega)$, $j=1,2$, respectively.
\begin{theorem} Let $u_1$ and $u_2$ be the solutions of the problem \eqref{space_time}. If $u_1(x, T) = u_2(x, T), \, x \in \Omega$, then we have 
$$g_1=g_2\; \text{in}\; \Omega.$$
\end{theorem}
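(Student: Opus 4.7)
The plan is to use linearity of the forward problem and work at the level of Fourier coefficients in the eigenbasis $\{\varphi_k\}_{k\geq 1}$ of $(-\Delta)^s$ on $\Omega$ with homogeneous exterior Dirichlet data. Set $w := u_1 - u_2$ and $g := g_1 - g_2 \in L^2(\Omega)$. By linearity of \eqref{space_time}, $w$ is the weak solution of \eqref{space_time} with initial value $g$, so the expansion \eqref{expansion} applies and gives
\begin{equation*}
w(x,t) = \sum_{k=1}^{\infty} g_k\, E_{\alpha,1}(-\lambda_k t^{\alpha})\, \varphi_k(x), \qquad g_k := (g,\varphi_k),
\end{equation*}
with convergence in $C([0,T];L^2(\Omega))$ by the regularity result stated just before \eqref{inv}.

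Next, I would plug in the hypothesis $w(\cdot,T) = u_1(\cdot,T) - u_2(\cdot,T) = 0$ in $L^2(\Omega)$, take the inner product with $\varphi_j$ for each fixed $j \geq 1$, and use the orthonormality of $\{\varphi_k\}$ in $L^2(\Omega)$. This collapses the series to the single identity
\begin{equation*}
g_j\, E_{\alpha,1}(-\lambda_j T^{\alpha}) = 0 \qquad \text{for every } j \geq 1.
\end{equation*}

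The decisive step, and the only place where something nontrivial is used, is to conclude $g_j = 0$ for all $j$. For this I need $E_{\alpha,1}(-\lambda_j T^{\alpha}) \neq 0$. Since $0 < \alpha < 1$ and $\lambda_j T^{\alpha} > 0$, the Mittag-Leffler function $t \mapsto E_{\alpha,1}(-\lambda_j t^{\alpha})$ is completely monotone on $[0,\infty)$ and in particular strictly positive; this strict positivity is exactly the fact already invoked in the proof of Proposition \ref{representation} (where the authors wrote ``$E_{\alpha,1}(-\lambda_k T^{\alpha}) > 0$''), and it can be justified via the contour integral representation together with Lemma \ref{mittag2}, or directly from Pollard's theorem on complete monotonicity.

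Once this non-vanishing is in hand, we get $g_j = 0$ for every $j \geq 1$. Because $\{\varphi_k\}$ is an orthonormal basis of $L^2(\Omega)$, Parseval's identity yields $\|g\|_{L^2(\Omega)}^2 = \sum_{j \geq 1} |g_j|^2 = 0$, hence $g = 0$, i.e.\ $g_1 = g_2$ in $\Omega$. The main obstacle is therefore not the algebraic manipulation but invoking the strict positivity of $E_{\alpha,1}$ on the negative real axis for $\alpha \in (0,1)$; everything else is routine orthogonal expansion and linearity.
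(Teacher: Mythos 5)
Your proposal is correct and follows essentially the same route as the paper: expand in the eigenbasis $\{\varphi_k\}$, use orthonormality to reduce the hypothesis at $t=T$ to the coefficient identities, and conclude via the strict positivity of $E_{\alpha,1}(-\lambda_k T^{\alpha})$ coming from complete monotonicity (the paper cites Gorenflo--Mainardi for exactly this fact). Working with the difference $w=u_1-u_2$ rather than equating the two series directly is only a cosmetic reorganization by linearity.
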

\begin{proof}
    Using the series representations of solutions $u_1$ and $u_2$, we have 
\begin{equation}
u_j(x, T)=\sum_{k=1}^{\infty}\left\{\left(g_{j}, \varphi_k\right) E_{\alpha, 1}\left(-\lambda_k T^\alpha\right)\right\} \varphi_k(x),\; \; \text{for} \; j=1,2.
\end{equation}
As $u_1(.,T)=u_2(.,T)$ in $\Omega$, we deduce that
\begin{equation*}
\sum_{k=1}^{\infty}\left\{\left(g_{1}, \varphi_k\right) E_{\alpha, 1}\left(-\lambda_k T^\alpha\right)\right\} \varphi_k(x)=\sum_{k=1}^{\infty}\left\{\left(g_{2}, \varphi_k\right) E_{\alpha, 1}\left(-\lambda_k T^\alpha\right)\right\} \varphi_k(x).
\end{equation*}
First, we multiply both sides of the above equality by $\varphi_k(x)$ and integrate the resulting equation with respect to $x$. Then by using the fact that $\varphi_k=0$ in $\Omega^c$, we obtain
    $$
\left(g_{1}, \varphi_k\right) E_{\alpha, 1}\left(-\lambda_k T^\alpha\right) = \left(g_{2}, \varphi_k\right) E_{\alpha, 1}\left(-\lambda_k T^\alpha\right), \; \text{for all}\; k\geq 1.
$$
It follows:
$$
\left(g_{1}-g_{2}, \varphi_k\right) E_{\alpha, 1}\left(-\lambda_k T^\alpha\right) =0, \; \text{for all}\; k\geq 1.
$$
Since $E_{\alpha, 1}\left(-\lambda_k t^\alpha\right)>0$, $t>0$ is completely monotonic (see \cite{gorenflo1997fractional}),  $E_{\alpha, 1}\left(-\lambda_k T^\alpha\right)>0$, we have the following: 
$$
\left(g_{1}, \varphi_k\right)  =\left(g_{2}, \varphi_k\right), \; \text{for all}\; k\geq 1,
$$
implies $g_{1}=g_{2}$ in $\Omega$. Thus the proof is complete.
\end{proof}

To find an estimate of $g$ from the additional data $h$, the most common method is to minimize the discrepancy
$$\left\|\mathcal{A}(g)-h\right\|_{L^2(\Omega)}^2,$$
where the operator $\mathcal{A}$ is defined by 
\begin{equation}\label{operator}
\begin{aligned}
\mathcal{A}: L^{2}(\Omega) & \longrightarrow L^{2}(\Omega), \\
g & \longmapsto u(.,T).
\end{aligned}
\end{equation}
We know that the linear operator $\mathcal{A}$ is self-adjoint, for more details we refer the readers to \cite{karapinar2020identifying,wang2015quasi}. Next, we prove that the operator $\mathcal{A}$ is compact. 
\begin{proposition}
   The linear operator $\mathcal{A}$ defined by  \eqref{operator} is a compact operator from $L^2(\Omega)$ into $L^2(\Omega)$.
\end{proposition}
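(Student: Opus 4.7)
The plan is to exploit the spectral representation of $\mathcal{A}$ already available through the eigenfunction expansion \eqref{expansion}. Evaluating at $t=T$ and using that $g=\sum_{k\ge 1}(g,\varphi_k)\varphi_k$ with $\{\varphi_k\}$ an orthonormal basis of $L^2(\Omega)$, the operator $\mathcal{A}$ acts diagonally:
\begin{equation*}
\mathcal{A}(g)=\sum_{k=1}^{\infty}\sigma_k\,(g,\varphi_k)\,\varphi_k,\qquad \sigma_k:=E_{\alpha,1}(-\lambda_k T^{\alpha}).
\end{equation*}
So the proof reduces to showing that a diagonal operator on an orthonormal basis whose eigenvalues tend to zero is compact.

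First, I would establish that $\sigma_k\to 0$ as $k\to\infty$. Since $\lambda_k T^{\alpha}>0$, the argument of $z_k=-\lambda_k T^{\alpha}$ equals $\pi$, which lies in the range $\pi\alpha/2<\mu\le|\arg z|\le\pi$ required by Lemma \ref{mittag2}. That lemma therefore yields
\begin{equation*}
0<\sigma_k=E_{\alpha,1}(-\lambda_k T^{\alpha})\le\frac{C_0}{1+\lambda_k T^{\alpha}}.
\end{equation*}
Because $\lambda_k\to+\infty$, it follows that $\sigma_k\to 0$.

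Next, I would approximate $\mathcal{A}$ in operator norm by its spectral truncations
\begin{equation*}
\mathcal{A}_N(g):=\sum_{k=1}^{N}\sigma_k\,(g,\varphi_k)\,\varphi_k,
\end{equation*}
each of which has finite rank and is therefore compact. By Parseval's identity,
\begin{equation*}
\|\mathcal{A}(g)-\mathcal{A}_N(g)\|_{L^2(\Omega)}^2=\sum_{k>N}\sigma_k^2\,|(g,\varphi_k)|^2\le\Big(\sup_{k>N}\sigma_k\Big)^{\!2}\,\|g\|_{L^2(\Omega)}^2,
\end{equation*}
so $\|\mathcal{A}-\mathcal{A}_N\|_{\mathcal{L}(L^2(\Omega))}\le\sup_{k>N}\sigma_k\to 0$ as $N\to\infty$.

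Finally, since $\mathcal{A}$ is the operator-norm limit of the finite-rank (hence compact) operators $\mathcal{A}_N$, and the space of compact operators on $L^2(\Omega)$ is closed in $\mathcal{L}(L^2(\Omega))$, one concludes that $\mathcal{A}$ is compact. There is no substantial obstacle here: the only nontrivial ingredient is the decay estimate for the Mittag-Leffler function, and Lemma \ref{mittag2} supplies it directly; the rest is the standard diagonal-operator argument.
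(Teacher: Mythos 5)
Your proof is correct and follows essentially the same route as the paper: both approximate $\mathcal{A}$ in operator norm by the finite-rank spectral truncations $\mathcal{A}_N$, control the tail via Parseval together with the Mittag-Leffler decay bound of Lemma \ref{mittag2}, and conclude by the closedness of the compact operators under operator-norm limits. Your version is, if anything, slightly more explicit about why the hypothesis of Lemma \ref{mittag2} applies ($\arg(-\lambda_k T^{\alpha})=\pi$), but the argument is the same.
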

\begin{proof}
By \eqref{expansion} and \eqref{operator}, we deduce that
\begin{equation}\label{inifinte}
    \mathcal{A}(g)=\sum_{k=1}^{+\infty} g_k\,E_{\alpha, 1}\left(-\lambda_k T^{\alpha}\right)\, \varphi_k(x),\; \forall g \in L^2(\Omega).
\end{equation}
We define the finite rank operators $\mathcal{A}_N$ as follows:
\begin{equation}\label{finite}
\mathcal{A}_N(g) :=\sum_{k=1}^{N} g_k\,E_{\alpha, 1}\left(-\lambda_k T^{\alpha}\right)\, \varphi_k(x),\; \forall g \in L^2(\Omega).
\end{equation}
From \eqref{inifinte} and \eqref{finite}, we get
$$\| \mathcal{A}(g)-\mathcal{A}_N(g)\|_{L^2(\Omega)}^2= \sum_{k=N+1}^{+\infty}\big|E_{\alpha, 1}\left(-\lambda_k T^{\alpha}\right)\big|^2\, |g_k|^2.$$
Thanks to Lemma \ref{mittag2}, we obtain 
$$\| \mathcal{A}(g)-\mathcal{A}_N(g)\|_{L^2(\Omega)}\leqslant \frac{C_0}{T^\alpha\, \lambda_{N}}\| g\|_{L^2(\Omega)}.$$
Therefore, $\| \mathcal{A}(g)-\mathcal{A}_N(g)\|_{L^2(\Omega)} \longrightarrow 0$ in the sense of operator norm in $L(L^2(\Omega); L^2(\Omega))$ as $N \longrightarrow \infty.$
\end{proof}
Since equations with compact operators are  ill-posed, the inverse  problem under consideration is also ill-posed.\\

It is well known that in practical applications, the given data $h$ is typically not exact, but rather a distortion of the unknown  $u[g_{exact}]$. This distortion is often modeled by an additive noise or an error term $\theta$. Denote by $h^\theta$ a noisy function of $h$ satisfying 
$$\|h-h^\theta \| \leqslant \theta.$$
Due to this noise associated with the measured data, the solution becomes very sensitive to the
measured data, which causes severe numerical instabilities. Thus the considered inverse problem is ill-posed in the sense of Hadamard \cite{hadamard1902problemes}. In the following, we consider an example 
to see that the considered inverse problem is ill-posed. More precisely, we will prove that the solution to the inverse problem does not depend continuously on the final time data $h$.
In doing so, let us choose an input final data $h_1$ as 
$$h_p(x)=\frac{\varphi_p(x)}{\sqrt{\lambda_p}}.$$
From Proposition \ref{representation}, the initial data corresponding to $h^p$ is represented as follows:
\begin{equation*}
g^p(x)= \sum_{k=1}^{\infty} \frac{\big( h_1, \varphi_k\big) }{ E_{\alpha,1}(-\lambda_k T^{\alpha})}\, \varphi_k(x).
\end{equation*}
Using the fact the set of eigenfunctions  $\left\{\varphi_{k}\right\}_{k=1}^{\infty}$ forms a complete orthonormal basis of $L^{2}(\Omega)$, we have
\begin{equation*}
g^p(x)= \frac{\varphi_p(x)}{\sqrt{\lambda_p}\,E_{\alpha,1}(-\lambda_p T^{\alpha})}.
\end{equation*}
On the other hand, let us choose other input final data as $h_2 \equiv 0$. By Proposition \ref{representation}, the initial value corresponding to $h_0$ is $g \equiv 0$. An error in $L^2$ norm
between two input final data is
$$\| h_p-h_0\|_{L^2(\Omega)}= \Big\| \frac{\varphi_p}{\sqrt{\lambda_p}}\Big\|_{L^2(\Omega)}.$$
Since the family $\left\{\varphi_k\right\}_{k=1}^{\infty}$ forms an orthonormal basis of $L^2(\Omega)$, one can get 
$$\| h_p-h_0\|_{L^2(\Omega)}= \frac{1}{\sqrt{\lambda_p}}.$$
It follows
\begin{equation}\label{data}
\lim_{p\to\infty} \| h_p-h_0\|_{L^2(\Omega)}  =0.    
\end{equation}
The error in the $L^2$ norm between the two corresponding initial values is
$$\| g^p-g\|_{L^2(\Omega)}=\Big\| \frac{\varphi_p}{\sqrt{\lambda_p}\,E_{\alpha,1}(-\lambda_p T^{\alpha})}\Big\|_{L^2(\Omega)}= \frac{1}{\sqrt{\lambda_p}\,E_{\alpha,1}(-\lambda_p T^{\alpha})}.$$
From Lemma \ref{mittag2}, we deduce that there exists $C=C(T,\alpha,C_0)>0$ such that
$$\| g^p-g\|_{L^2(\Omega)}\geq C\, \sqrt{\lambda_p} \,.$$
This leads to \begin{equation}\label{solution}
\lim_{p\to\infty} \| g^p-g\|_{L^2(\Omega)}  =+\infty.    
\end{equation} Combining \eqref{data} and \eqref{solution}, we conclude that the inverse problem that we consider is ill-posed. \\

In order to handle the possible numerical instability of the inverse problem, there are some regularization methods in the literature. For instance, the quasi-reversibility method \cite{lattes1969method}, an alternate iterative method \cite{kozlov1989iterative}, and the quasi-boundary value method \cite{abdulkerimov1977regularizatio}. In the current work, we employ one of the most commonly used methods for the regularization of ill-posed problems, that is, the Tikhonov regularization method. We define the Tikhonov regularization
functional as follows:
$$\mathcal{T}_\nu(g):=\frac{1}{2}\left\|\mathcal{A}(g)-h^\theta\right\|_{L^2(\Omega)}^2+\frac{\gamma}{2} \left\|g\right\|_{L^2(\Omega)}^2,$$
where $\gamma>0$ is a positive constant, called the regularization parameter. In the function above, the first term denotes the defect between the exact data and the
noisy data, and the second term is a penalty term for stabilizing the numerical solution. Consequently, the considered inverse  problem may be reformulated and modeled by the following regularized optimization problem:
\begin{equation*}
(\mathcal{P}_{op}) \left\{\begin{array}{l}
\text { Find } g^\star \in L^2(\Omega)\text { such that } \\
\\
\mathcal{T}_\nu(g^\star):=\displaystyle\min _{g \in L^{2}(\Omega)} \mathcal{T}_\nu(g).
\end{array}\right.
\end{equation*}
We know that for problem $(\mathcal{P}_{op})$,  there exists a unique minimizer $g_\gamma^\theta$ called Tikhonov regularized solution
which converges to the exact solution $g^\star$ under a suitable choice of the regularization parameter $\gamma$, see \cite{engl1996regularization}. For more details about the analysis of the optimization problem, interested readers can follow the same technique developed in \cite{abdelwahed2022inverse,bensaleh2021inverse,bensalah2021inverse,jiang2020numerical, hrizi2022determination}. The next section is concerned with a numerical method for finding the unique minimizer of the Tikhonov regularization functional $\mathcal T_\nu(g).$

\section{Reconstruction approach}\label{rec} This section is devoted to  numerical reconstruction approach for solving the minimization problem $(\mathcal{P}_{op})$. The proposed approach is based on two steps:
\begin{itemize}
    \item The first one is concerned with the derivation of an optimality condition that provides a new characterization of the unknown term $g^\star$.
    \item The second one is that  we employ the conjugate gradient algorithm to solve the variational problem.
\end{itemize}
Now we derive a first-order optimality condition that provides a simplified characterization of the unknown initial value $g^\star$. The determination of this condition is based on the calculation of the gradient of  $\mathcal{T}_\nu$, which can be obtained by constructing an adjoint problem.\\

Hereafter, we denote by $u_g$ the solution of \eqref{space_time} to emphasize its dependency upon the
unknown function $g$. We point out that the weak formulation of problem \eqref{space_time} reads as follows: Find $u_g \in \mathcal{H}^{\alpha,s}(0, T; \Omega)$ such that $u_g(.,0)=0$ and
\begin{equation}\label{weak_sequel} 
\int_0^T \int_\Omega \partial_t^\alpha u_g\, w \, dx\,dt + \int_0^T \mathcal{B}_s(u_g(.,t), w(.,t))\,dt=0,
\end{equation}
for any test function $w \in H^\alpha(0,T; \widetilde{H}^{s}(\Omega) \cap H^{s+\gamma}(\Omega))$ with $J_{T^-}^{1-\alpha}w=0$  in $\Omega \times \{T\}$. 
\begin{remark}\label{frechet} The map $g \longmapsto u_g$ is differentiable in the sense of Fréchet and the linearity of \eqref{space_time}
immediately yields 
$$
u_g^{\prime}\cdot p=\lim _{\epsilon \rightarrow 0} \frac{u_{g+\epsilon p}-u_g}{\epsilon}=u_{p},\; \; \forall p \in L^{2}(\Omega),
$$
here $u_g^{\prime}\cdot p$ denotes the Fréchet derivative of $u_g$ in the direction $p$ and $u_p$ is the solution problem of \eqref{space_time} with $g=p$. Notice that, from Proposition \ref{integration1} and the identity \eqref{weak_sequel}, one can check that the function $u_p$ satisfies 
\begin{equation}\label{weak_forward}  \int_0^T \int_\Omega  u_p\, D_t^\alpha w \, dx\,dt + \int_0^T \mathcal{B}_s(u_p(.,t), w(.,t))\,dt=\int_\Omega p(x)\, J_{T^-}^{1-\alpha}w(x,0)\, dx,
\end{equation}
for any test function $w \in H^\alpha(0,T; \widetilde{H}^{s}(\Omega) \cap H^{s+\gamma}(\Omega))$ with $J_{T^-}^{1-\alpha}w=0$  in $\Omega \times \{T\}$. 
\end{remark}
In order to establish the optimality condition, we need the Fréchet derivative $\mathcal{T}_\nu^{\,\prime}(g)$ of the objective functional $\mathcal{T}_\nu(g)$. By sample calculations, one can easily derive the following: 
\begin{equation} \label{der1}
\begin{split}
\mathcal{T}_\nu^{\,\prime}(g) \cdot p & =\lim _{\epsilon \rightarrow 0} \frac{\mathcal{T}_\nu(g+\epsilon p)-\mathcal{T}_\nu(g)}{\epsilon} \\
 & = \int_{\Omega}
\big[u_g(x,T)-h^\theta(x) \big]\;u_p(x,T) \,\dx + \gamma
\int_{\Omega}g(x)\,p(x)\,\dx.
\end{split}
\end{equation}
In order to reduce the computational costs for the Fréchet derivatives, we state it in the natural form. Namely, we need to find an explicit function $R(x)$ such that $\mathcal{T}_\nu^{\,\prime}(g) \cdot p=(R, p)$. So, we need to replace the term $\int_{\Omega}
\big[u_g(x, T)-h^\theta(x) \big]\;u_p(x, T) \,\dx$ in \eqref{der1} by $p$ times a function of $x$. Therefore, we introduce the following adjoint problem:
\begin{eqnarray}\label{addjoint-z}
\left\{
  \begin{array}{rllcl}
    \partial_{T^-}^\alpha z_g = - (-\Delta)^s z_g & + & (u_g(.,T)-h^\theta)\,\delta(t-T),    &(x, t) \in  & \Omega \times (0,T], \\
     z_g&=&0,  & (x, t) \in  &  \Omega^c\times(0,T], \\
    z_g&=&0,  & (x, t) \in & \Omega \times \{T\},
  \end{array}
\right.
\end{eqnarray}
where $\delta(t-T)$ is the Dirac delta function at the time $t=T$. The weak formulation of the adjoint problem \eqref{addjoint-z} reads as: Find $z_g \in H^\alpha(0,T; \widetilde{H}^{s}(\Omega) \cap H^{s+\gamma}(\Omega))$ such that $J_{T^-}^{1-\alpha}z_g=0$ in $\Omega \times \{T\}$ and
\begin{equation}\label{weak_adjoint}
 \int_0^T \int_\Omega D_t^\alpha z_g\, w \, dx\,dt + \int_0^T \mathcal{B}_s(z_g(.,t), w(.,t))\,dt=\int_\Omega (u_g(x,T)-h^\theta)\, w(x,T) \, dx,   
\end{equation}
for any test function $w \in W^{\alpha,s}(0, T; \Omega)$ with $w(.0)=0$  in $\Omega$. After these considerations, we can take $z_g$ and $u_p$ as mutual test functions in identities \eqref{weak_forward} and \eqref{weak_adjoint} and we get
\begin{equation}\label{reduced}
 \int_{\Omega}
\big[u_g(x, T)-h^\theta(x) \big]\;u_p(x, T) \,\dx:=\int_\Omega p(x)\,J_{T^-}^{1-\alpha}z_g(x,0)\, dx.   
\end{equation}
Therefore \eqref{reduced} in \eqref{der1} yields
\begin{equation*} \label{der2}
\mathcal{T}_\nu^{\,\prime}(g) \cdot p = \int_\Omega J_{T^-}^{1-\alpha}z_g(x,0)\,p(x)\, dx + \gamma
\int_{\Omega}g(x)\,p(x)\,\dx.
\end{equation*}
Based on the above identity, we deduce that the solution to the minimization problem $(\mathcal{P}_{op})$ satisfies the
following optimality condition
$$J_{T^-}^{1-\alpha}z_{g^\star}(.,0)+ \gamma
g^\star=0, \; \text{ in }\; \Omega.$$
Next we propose a numerical algorithm  for identifying the minimizer of function $\mathcal{T}_\nu(g)$ from noisy measurement of the final time. The numerical algorithm that we propose is based on the conjugate gradient method and Morozov's discrepancy principle (see, e.g., \cite{Morozov1}). Let $g_k$ be the $k$th approximate
solution to $g(x)$. Denote
\begin{equation}\label{process_algo}
g_{k+1}=g_{k}+\zeta_{k}\, d_{k}, \quad k=0,1,2, \cdots,
\end{equation}
where the initial guess $g_0$ is given, the term $\zeta_{k}$ is the step size, and $d_{k}$ is a descent direction in the $k$th iteration. The conjugate gradient method uses the following iteration formula to update the descent direction:
\begin{eqnarray}\label{d_algo}
d_{k}=\left\{
  \begin{array}{rllcl}
    &-\mathcal{T}_\nu^{\, \prime}(g_0)   &\hbox{if} & k=0, \\
    \\
     &-\mathcal{T}_\nu^{\, \prime}(g_k)+\alpha_{k}\,d_{k-1}  & \hbox{if}& k\geq 1,
  \end{array}
\right.
\end{eqnarray}
where $\alpha_k$ is the conjugate coefficient calculated by
\begin{eqnarray}\label{alpha_algo}
\alpha_{k}=\left\{
  \begin{array}{rllcl}
    &0   &\hbox{if} & k=0, \\
     &\frac{\ds \int_{\Omega}\big|\mathcal{T}_\nu^{\, \prime}(g_k)\big|^{2} \mathrm{~d} x}{\ds \int_{\Omega}\big|\mathcal{T}_\nu^{\, \prime}(g_{k-1})\big|^{2} \mathrm{~d} x}  & \hbox{if}& k\geq 1.
  \end{array}
\right.
\end{eqnarray}
Since the problem \eqref{space_time} is linear with respect to the initial value $g$, one can deduce from \eqref{der1} that 
\begin{equation*} 
\mathcal{T}_\nu(g_{k}+\zeta_{k}\, d_{k}) = \frac{1}{2}\int_{\Omega}
\big(u_{g_k}(x,T)+\zeta_k\, u_{d_k}(x,T)-h^\theta(x) \big)^2\,\dx + \frac{\gamma}{2}
\int_{\Omega}\big( g_{k}+\zeta_{k}\, d_{k}\big)^2\,\dx.
\end{equation*}
To this end, we determine the step size $\zeta_k$ by imposing the following condition $$\displaystyle\frac{\partial \mathcal{T}_\nu }{\partial \zeta}(g_{k}+\zeta_k d_{k})=0.$$
Therefore, 
\begin{equation}\label{zeta_algo}
\zeta_{k}=-\frac{\ds \int_{\Omega}\big(u_{g_k}(x, T)-h^\theta\big)\,u_{d_k}(x,T) \, \dx+\gamma \int_{\Omega} g_{k}\, d_{k} \,\dx}{\ds \int_{\Omega} u_{d_k}^{2}(x,T) \,\dx+\gamma \int_{\Omega} d_{k}^{2} \,\dx}.
\end{equation}
We can summarize the main steps of our reconstruction approach in the following algorithm: 

\begin{algorithm}[H]
\begin{enumerate}
\item Initialize $g_0$ andset $k=0.$

\item Solve the direct problem \eqref{space_time} with $g=g_{k}$, and compute  the residual $$r_{k}=u_{g_k}(x,T)-h^\theta(x),\, x\in \Omega.$$

\item Solve the adjoint problem \eqref{addjoint-z} and evaluate the gradient $\mathcal{T}_\nu^{\, \prime}(g_k)$.

\item Calculate the conjugate coefficient $\alpha_{k}$ by \eqref{alpha_algo} and the direction $d_{k}$ by \eqref{d_algo}.

\item Compute $u_{d_k}$ via solving the  problem \eqref{space_time} with $g=d_{k}$.

\item Calculate the step size $\zeta_{k}$ by \eqref{zeta_algo}.

\item Update the initial value $g_{k+1}$ by \eqref{process_algo}.

\item Set $k = k +1$ and go to Step (2), repeat the process until a stopping condition is satisfied.

\end{enumerate}
    \caption{\it Conjugate Gradient Method {\bf (CGM)}}
    \label{algo1}
\end{algorithm}
Notice that the most important point is to find a suitable stopping rule for an iteration procedure. To deal with this issue, we use the well-known Morozov's discrepancy principle \cite{morozov2012methods}. It is shown that 
\begin{equation}\label{con_cond}
\mathcal{R}_{k} \leqslant \sigma \theta < \mathcal{R}_{k-1},\quad \text{with} \quad \mathcal{R}_{k}=\|r_{k}\|_{L^2(\Omega)} \quad \text{for each} \quad k \in \mathbb{N}, 
\end{equation}
is sufficient for convergence. It means that we choose the stopping index $\mathcal{I}_s$ such that the inequality \eqref{con_cond} is fulfilled, see Hanke and Hansen \cite{hanke1993regularization}. Otherwise, if the given input data $h^\theta$ is exact (without noise, i.e. $\theta=0$), the stopping index can be taken as $\mathcal{I}_s=100.$ The convergence of the proposed method  is addressed in \cite{gilbert1992global, zheng2014recovering} and it has been successfully employed for solving some inverse problems, for instance, see \cite{hrizi2022determination, sun2017identification, yan2019inverse, wei2016inverse}. 
\section{Numerical experiments}\label{num}
In this section, we deal with the numerical implementation of the proposed reconstruction approach and we present some numerical results. To be more precise, we will apply the iterative algorithm {\bf(CGM)} established in the previous section to the numerical treatment of problem $(\mathcal{P}_{op})$, that is, the identification of the initial value $g$ of the initial-boundary value problem \eqref{space_time}. \\

\noindent To begin with, we list the parameters and their values in the numerical calculations as follows:
\begin{itemize}
    \item The domain $\Omega$ is taken to be $\Omega=(-1,1)$.
    \item The final time is fixed to be $T=1$.
    \item The constant $\sigma$  in \eqref{con_cond} is taken to be $\sigma=1.01.$
    \item In the adjoint problem \eqref{addjoint-z}, the Dirac delta
function is approximated by
 $$\delta(t-T)\approx \frac{e^{-(t-T)^2/\eta^2}}{\eta \sqrt{\pi}},$$
where $\eta>0$ is a small positive constant, it is taken to be $\eta=10^{-3}.$ 
 \item We choose the initial guess $g_0$ as a constant, e.g., $g_0 \equiv 1$.
\end{itemize}
\noindent With the exact solution $g_{ex}$, we produce the noisy final data $h^\theta$ by adding a random perturbation, i.e.
$$h^\theta(x):=h(x)+\mu \, h(x)\, \big[2\,\text{\it rand}(\text{size}(h))-1\big],\; \; x\in \overline{\Omega}.$$
Here $\mu \geq 0$ and {\it rand}$(.)$ denote the relative noise level and the uniformly distributed random number in $(0,1)$, respectively. The function $h$ represents the final time value of the solution to problem \eqref{space_time} with initial value $g=g_{ex}$, that is $h(x)=u_{g_{ex}}(x,T)$, $x\in  \overline{\Omega}.$ The corresponding noise level is calculated by $$\theta = \|h^\theta - h\|_{L^2(\Omega)}.$$
In order to evaluate the performance of the conjugate gradient algorithm {\bf (CGM)}, we introduce a $L^2$-error function $E_k$ defined as 
$$E_k=\|g_{ex}-g_k\|_{L^2(\Omega)},$$
which is the error between the exact solution $g_{ex}$ and the reconstructed one $g_k$ at the $k$th iteration.\\

In order to apply the proposed algorithm, we need to solve the forward problem \eqref{space_time} and the adjoint problem \eqref{addjoint-z} numerically. Since it is very difficult to know the eigenpairs $\left\{\left(\varphi_{k}, \lambda_{k}\right)\right\}_{k=1}^{\infty}$ associated to the operator $(-\Delta)^s$ in the explicit forms. Thus, in this study, we use an approximation method, similar to the one in \cite{bensaleh2021inverse} to solve the direct and adjoint problems in each iteration instead of
using \eqref{expansion}, and the series solution of the adjoint problem. Namely, we use a fully discrete approximation based on:
\begin{itemize}
    \item {\it A finite difference scheme:} This method is introduced by Y. Liu et al \cite{lin2007finite} to discretize the Caputo derivative in time. It is proved that this method is of order $2-\alpha$.
    \item {\it The standard $\mathbb{P}_1$ finite element method:} This one is utilized for the space approximation. The authors in \cite{acosta2017short,borthagaray2022fractional} utilized this method to find an approximate solution to the following elliptic problem involving the fractional Laplace operator:
    
    \begin{equation*}
\left\{\begin{aligned}
 (-\Delta)^s u & = f & & \text { in } \Omega, \\
u & = 0 & & \text { in }  \Omega^c.
\end{aligned}\right.
\end{equation*}
They proved that the convergence order of this method depends on the regularity of the right-hand side f, that is
$$
\eth(s):=\left\{
  \begin{array}{rllcl}
    &\frac{1}{2}+\beta&\hbox{if} & f \in H^{\frac{1}{2}-s}(\Omega), \\
    \\
     &\ds 2\beta& \hbox{if}& f \in L^2(\Omega),
  \end{array}
\right.
$$
where $\beta =\min\{s, \frac{1}{2}\}$.
\end{itemize}
In order to illustrate the performance of the proposed numerical approach, we solve the direct problem numerically. Namely, we find the approximate solution to the following problem:
\begin{equation*}
\left\{\begin{aligned}
\partial_t^\alpha u = -(-\Delta)^s u & + F, & & (x, t) \in  \Omega \times(0, T), \\
u & = 0, & & (x, t) \in   \Omega^c \times(0, T), \\
u & = g, & & (x, t) \in  \Omega \times\{0\}.
\end{aligned}\right.
\end{equation*}
By choosing 
$$F(x,t)=1+t^\alpha+\frac{\Gamma(1+\alpha)\, \sqrt{\pi}\,2^{-2s}\, \big(1-x^2\big)^s}{\Gamma(s+1/2)\,\Gamma(s+1)} \text{ and } g(x)=\frac{\sqrt{\pi}\,2^{-2s}\, \big(1-x^2\big)^s}{\Gamma(s+1/2)\,\Gamma(s+1)},$$
then the unique solution to the above problem reads as 
$$u_{ex}(x,t)=(1+t^\alpha)\, \frac{\sqrt{\pi}\,2^{-2s}\, \big(1-x^2\big)^s}{\Gamma(s+1/2)\,\Gamma(s+1)}\, \chi_{(-1,1)}(x), \; \forall (x,t) \in (-1,1)\times (0,T),$$
where $\chi_{(-1,1)}$ denotes the characteristic function of the interval $(-1,1)$.\\

In our computation, we divide the interval $\Omega$ into $N$ equally spaced subintervals, with a mesh size $\Delta x=1/N$. Similarly, the time interval $[0, T]$ will be divided uniformly into $K$ subintervals, and by $\Delta t=1/K$ we denote the time step. In order to illustrate the accuracy of the proposed approximation method, we measure the $L^2-$error $\|u_{ex}-u_{app}\|_{L^2(\Omega)}$ evaluated at the final time $T$. In the following, we examine the time and spatial convergence. A fixed small time step $(\Delta t=1/200)$ is taken to see the spatial convergence
and vice-versa. In Table 1, we show the temporal and spatial convergence rates, indicated in the
column rate (the number in the bracket is the theoretical rate), for different values of $\alpha$ and $s$, which fully confirm the theoretical
results obtained in \cite{lin2007finite} for the time discretization and in \cite{acosta2017short,borthagaray2022fractional} for the spatial one.\\

\begin{table}[h]
    \centering
   \begin{tabular}[b]{|l|c|c|c|c|c|l|}
\hline
\multicolumn{2}{|c|}{\backslashbox{Derivative orders}{Step size}}  & $1/50$ & $1/100$ & $1/200$&$1/400$&{\bf rate} \\
\hline 

\multirow{2}{2cm}{$s=0.5$} & $\alpha=0.3$ & $3.66e-04$&  $1.11e-04$& $3.37e-05$& $1.07e-05$&$1.69$ {\bf (1.7)}\\
\cline{2-7}
 & $\alpha=0.8$ & $2.74e-04$ & $1.20e-04$&$4.93e-05 $&$2.25e-05$&$1.21$ {\bf (1.2)} \\
\hline
\multirow{2}{2cm}{$\alpha=0.5$} & $s=0.2$ & $9.35e-03$&$6.03e-03$&$3.84e-03$&$2.43e-03$ &$0.64$ {\bf (0.7)} \\
\cline{2-7}
 & $s=0.9$ &$2.92e-04$&$1.59e-04$&$8.36e-05$&$4.30e-05$&$0.92$ {\bf (1)} \\
\hline 
\end{tabular}
    \caption{Values of the $L^2-$error function  relative to the step size variation.}
    \label{errors}
\end{table}

In Figure \ref{slope}, we plot the errors in the $L^2-$norm as a function of the (time or space) step sizes. A logarithmic scale has been used for both step-axis and error-axis in these figures.
\begin{figure}[H]
     \centering
     \begin{subfigure}[b]{0.53\textwidth}
         \centering
         \hspace*{.3cm}\includegraphics[width=60mm,height=50mm]{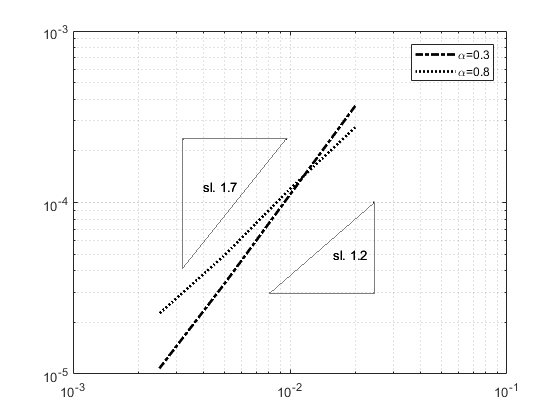}
         \caption{with respect to $\Delta t$}
         
     \end{subfigure}
     \hfill
     \begin{subfigure}[b]{0.46\textwidth}
         \centering
         \hspace*{.3cm}\includegraphics[width=60mm,height=50mm]{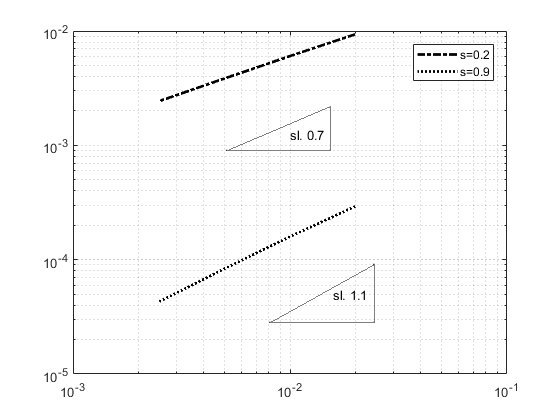}
         \caption{with respect to $\Delta x$}
         
     \end{subfigure}
        \caption{Evolution of the $L^2-$ error functions with respect to the step sizes (sl: slope).}
        \label{slope}
\end{figure}
From Table \ref{errors} and Figure \ref{slope}, one can observe that the numerical results show O$(\Delta t^{2-\alpha})$ and O$(\Delta x^{\eth(s)})$ convergence rates in the $L2-$norm for the temporal and spatial discretization, respectively. Therefore, one can conclude that the numerical and theoretical convergence rates (obtained in \cite{lin2007finite,acosta2017short,borthagaray2022fractional}) are
nearly identical. To this end, we plot in Figure \ref{approx-sol} the variations of the exact  $u_{ex}$ and the approximate solutions $u_{app}$ at time $t=0.5$ for different values of the derivatives orders $\alpha$ and $s$ for showing the quality of the obtained results.

\begin{figure}[H]
     \centering
     \begin{subfigure}[b]{0.53\textwidth}
         \centering
         \hspace*{.3cm}\includegraphics[width=60mm,height=50mm]{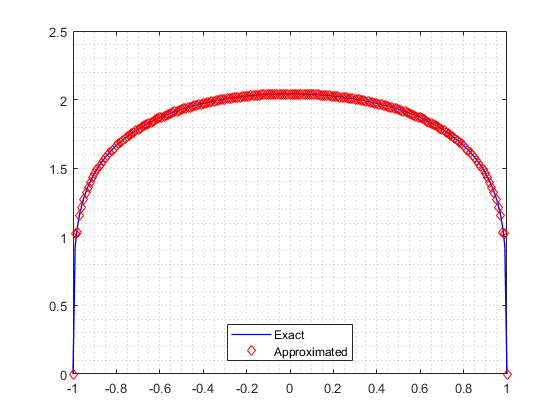}
         \caption{For $\alpha=0.3$ and $s=0.2$}
     \end{subfigure}
     \hfill
     \begin{subfigure}[b]{0.46\textwidth}
         \centering
         \hspace*{.3cm}\includegraphics[width=60mm,height=50mm]{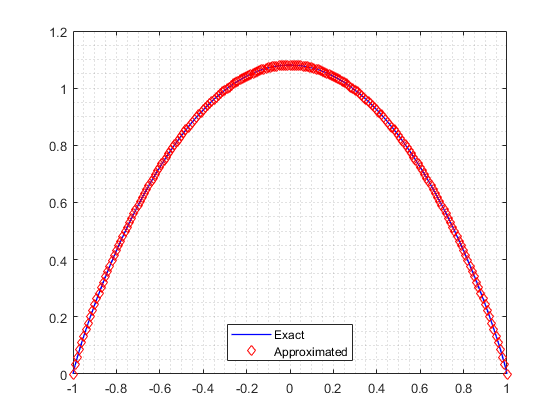}
         \caption{For $\alpha=0.3$ and $s=0.9$}
     \end{subfigure}
     \\
      \begin{subfigure}[b]{0.53\textwidth}
         \centering
         \hspace*{.3cm}\includegraphics[width=60mm,height=50mm]{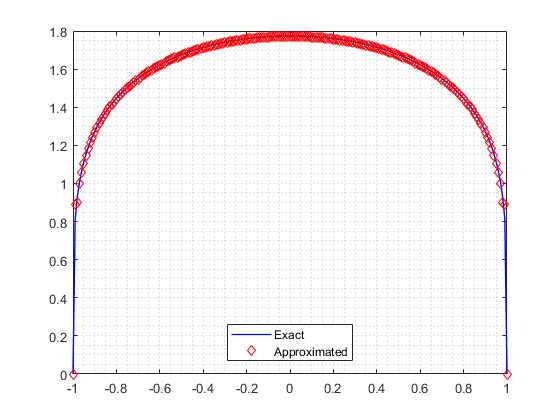}
         \caption{For $\alpha=0.8$ and $s=0.2$}
     \end{subfigure}
     \hfill
     \begin{subfigure}[b]{0.46\textwidth}
         \centering
         \hspace*{.3cm}\includegraphics[width=60mm,height=50mm]{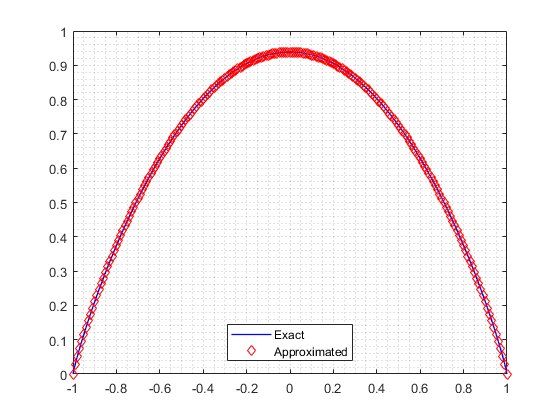}
         \caption{For $\alpha=0.8$ and $s=0.9$}
     \end{subfigure}
        \caption{Exact and approximate solutions for different values of the fractional derivatives orders $\alpha$ and $s$.}
        \label{approx-sol}
\end{figure}
In the rest of the paper, 
the grid size for the time and space variables
 will be taken to be $\Delta t=\Delta x=\frac{1}{100}$.\\

Then we test the performance of the proposed procedure in finding an estimate of the solution of the minimization problem $(\mathcal{P}_{op})$. More precisely, we will discuss the influence of some parameters such as the fractional derivatives orders $\alpha$ and $s$, the regularization parameter $\gamma$, the smoothness of the function to be reconstructed, and  the level of noise $\theta$.
\paragraph*{Example \ref{num}.1}\label{der_order} In this example, we study the influence of the fractional derivatives orders $\alpha$ and $s$ on the quality of the reconstructed results. In doing so, we apply the iterative procedure {\bf (CGM)} to recover
$$g_{ex}(x)=\cos(\pi x)\, \sin(\pi x),$$
with $(\alpha , s) \in  \{0.3, 0.8\} \times \{0.2, 0.9\}$ from an exact final data (i.e., from $h^\theta$, with
$\mu=0$). The results of this test example are illustrated in Figure \ref{ordre}.
\begin{figure}[H]
     \centering
     \begin{subfigure}[b]{0.53\textwidth}
         \centering
         \hspace*{.3cm}\includegraphics[width=70mm,height=60mm]{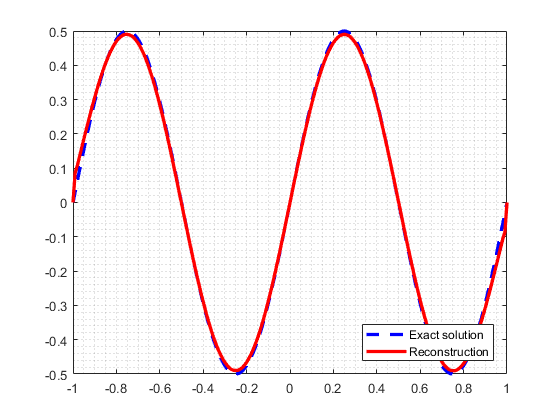}
         \caption{For $\alpha=0.3$ and $s=0.2$}
     \end{subfigure}
     \hfill
     \begin{subfigure}[b]{0.46\textwidth}
         \centering
         \hspace*{.3cm}\includegraphics[width=70mm,height=60mm]{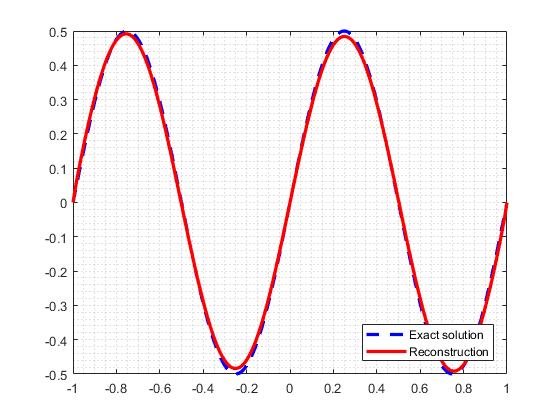}
         \caption{For $\alpha=0.3$ and $s=0.9$}
     \end{subfigure}
     \\
      \begin{subfigure}[b]{0.53\textwidth}
         \centering
         \hspace*{.3cm}\includegraphics[width=70mm,height=60mm]{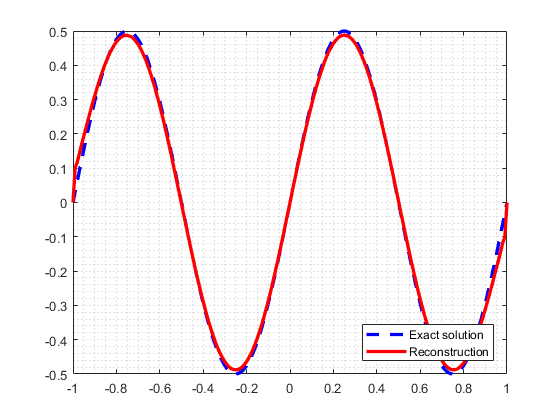}
         \caption{For $\alpha=0.8$ and $s=0.2$}
     \end{subfigure}
     \hfill
     \begin{subfigure}[b]{0.46\textwidth}
         \centering
         \hspace*{.3cm}\includegraphics[width=70mm,height=60mm]{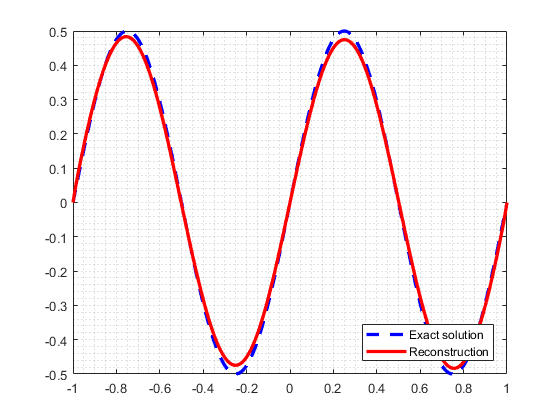}
         \caption{For $\alpha=0.8$ and $s=0.9$}
     \end{subfigure}
        \caption{Exact (blue dashed line) and Reconstructed functions (red line).}
        \label{ordre}
\end{figure}

The choices of $\alpha$ and $s$ in this test and the corresponding
numerical performances are listed in Table \ref{order_per}.
\begin{table}[H]
  \begin{center}
\begin{tabular}{||c| c|c|c| c||} 
 \hline
 $(\alpha,s)$ & $(0.3,0.2)$ & $(0.3,0.9)$ & $(0.8,0.2)$ & $(0.8,0.9)$ \\ 
 \hline
$E_k$ & $0.00654$ & $0.00811$ & $0.00783$ & $0.00827$ \\  
 \hline
\end{tabular}
\label{aa}
\end{center}
\caption{Choices of the fractional derivatives orders $\alpha$ and $s$ along with the errors $E_k$.}
\label{order_per}
\end{table}
From Figure \ref{ordre} and Table \ref{order_per}, one can conclude that the numerical results are quite accurate to the exact solution for each choice of the pairs $(\alpha, s)$. It means that the parameters $\alpha$ and $s$ have no significant influence on the reconstruction results. Therefore, based on the above remarks, the fractional derivatives orders $\alpha$ and $s$ will be fixed to be $(\alpha, s)= (0.5, 0.5)$ in the examples below. 
\paragraph*{Example \ref{num}.2}\label{regu_choice} As it is shown in Section \ref{inv_pbm}, the considered inverse problem is ill-posed in the Hadamard sense and a regularization principle is a key tool to find a stable solution. But  choosing an appropriate (not the optimal) value for the regularization parameter $\gamma$ is a crucial issue. In this study, we use the Tikhonov regularization which is a typical example of an a-priori parameter choice rule since the choice of $\gamma>0$ is made a-priori (depends only on the noise level $\theta>0$). Referring to \cite{galatsanos1991cross, Morozov1}, the regularization parameter $\gamma$ can be chosen such that the consistency condition
$$\lim_{\gamma \to 0} \frac{\theta}{\sqrt{\gamma}}=0$$
is satisfied. Thus, the parameter $\gamma$ can be taken to be $\gamma=10^{-2}\times \theta^{4/5}$. In order to present the importance of the regularization technique for stabilizing our inverse problem, we apply our iterative procedure to reconstruct the initial value $g_{ex}$ for $\gamma=0$ and $\gamma=10^{-2}\times \theta^{4/5}$ by taking a stopping index $\mathcal{I}_s=100$. For this purpose, we test our algorithm in the two following cases:
\begin{enumerate}
    \item[]{\it \bf $(i)$ Slightly noisy data:} In this case, we assume that the measured final data $h^\theta$ is perturbed by low relative noise levels. In Figure \ref{regularization2}, we compare the recovered solutions with the exact one in the cases of $\gamma=0$ and $\gamma=10^{-2}\times \theta^{4/5}$ with $\mu \in \{0.001, 0.005, 0.01\}.$
\begin{figure}[H]
     \centering
     \begin{subfigure}[b]{0.53\textwidth}
         \centering
         \hspace*{.3cm}\includegraphics[width=70mm,height=60mm]{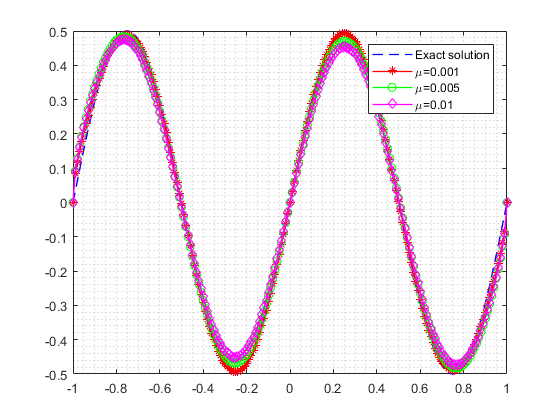}
         \caption{With $\gamma=0$}
         \label{reg_l_0}
     \end{subfigure}
     \hfill
     \begin{subfigure}[b]{0.46\textwidth}
         \centering
         \hspace*{.3cm}\includegraphics[width=70mm,height=60mm]{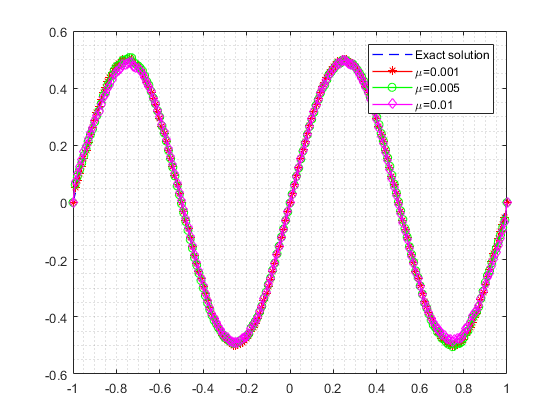}
         \caption{With $\gamma=10^{-2}\times \theta^{4/5}$}
         \label{reg_l_theta}
     \end{subfigure}
        \caption{Exact solution $g_{ex}$ and their reconstructions $g_k$ obtained by {\bf (CGM)} with $\mu \in \{0.001, 0.005, 0.01\}$. Left: with $\gamma=0$; Right: with $\gamma=10^{-2}\times \theta^{4/5}$.}
        \label{regularization2}
\end{figure}

As one can observe from  Figure \ref{regularization2} that the numerical approximations of $g_{ex}$
without using the regularization (see Figure \ref{reg_l_0}) are almost similar to those obtained
when $\gamma=10^{-2}\times \theta^{4/5}$ (see Figure \ref{reg_l_theta}).

\vspace{0.5cm}
    \item[]{\it \bf $(ii)$ Highly noisy data:} In this case, we consider the case where the measured data is perturbed by a high-levels of noise. We illustrate the  comparisons of recovered solutions with the exact one, in Figure \ref{regularization}, but now with $\mu \geq 0.05$, more precisely with relative levels noise $\mu \in \{0.05, 0.1, 0.15\}$. 
\begin{figure}[H]
     \centering
     \begin{subfigure}[b]{0.53\textwidth}
         \centering
         \hspace*{.3cm}\includegraphics[width=70mm,height=60mm]{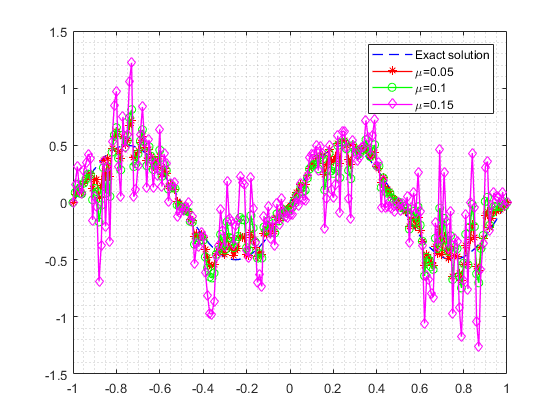}
         \caption{With $\gamma=0$}
         \label{reg_h_0}
     \end{subfigure}
     \hfill
     \begin{subfigure}[b]{0.46\textwidth}
         \centering
         \hspace*{.3cm}\includegraphics[width=70mm,height=60mm]{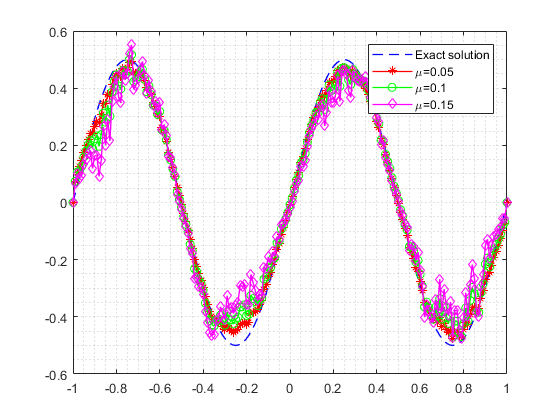}
         \caption{With $\gamma=10^{-2}\times \theta^{4/5}$}
         \label{reg_h_theta}
     \end{subfigure}
        \caption{Exact solution $g_{ex}$ and their reconstructions $g_k$ obtained by {\bf (CGM)} with $\mu \in \{0.05, 0.1, 0.15\}$. Left: with $\gamma=0$; Right: with $\gamma=10^{-2}\times \theta^{4/5}$.}
        \label{regularization}
\end{figure}

From Figure \ref{reg_h_0}, one can conclude that the numerical approximations of $g_{ex}$, without using the regularization (i.e. $\gamma=0$), have somewhat amplitude oscillations and there are very unstable and far away from the exact one. However, by using the regularization technique (i.e. $\gamma=10^{-2}\times \theta^{4/5}$), it can be clearly seen in  Figure \ref{reg_h_theta} that the amplitude of the oscillations decreases and numerical results become more accurate.
\end{enumerate}
We also conclude the following from example 5.2: 
\begin{itemize}
    \item If the  measured data is  slightly perturbed, the proposed algorithm {\bf (CGM)} provides stable numerical solutions to the minimization problem $(\mathcal{P}_{op})$ without the need to use the Tikhonov regularization
term (i.e. $\gamma=0$).
    \item If the final measured data $h^\theta$ is perturbed by high levels of noise, the proposed regularization method is necessary to solve the ill-posed
inverse problem.
\end{itemize}
This result is consistent with the property of ill-posed problems. Thus, in the examples below, the regularization parameter $\gamma$ will be taken to be $\gamma=10^{-2}\times \theta^{4/5}$. \\


 In the following examples, we test the performance of the proposed algorithm to reconstruct two examples of initial values (smooth and nonsmooth functions) by combining the conjugate gradient method with Morozov’s discrepancy principle. For each example, we investigate the convergence of the proposed approach and present the reconstruction results for various choices of the parameter $\mu$.  

\vspace{0.3cm}
\paragraph*{Example \ref{num}.3}  In this example,we test the numerical performance of algorithm {\bf (CGM)} in recovering a smooth initial value given  by 
$$g_{ex}^{sm}(x)=\sin(\pi x)\, e^{-x^2}-cos(\pi x)\, e^{x^2},$$
with various choices of the relative noise level $\mu$ (see Table \ref{ind1}). First, we investigate the convergence of the proposed algorithm and indicate the stooping index $\mathcal{I}_s$ which is determined as 

$$
\mathcal{I}_s=\left\{
  \begin{array}{rllcl}
    &100&\hbox{if} & \mu=0, \\
    \\
     &\ds\inf \mathcal{E}_\theta& \hbox{if}& \mu>0,
  \end{array}
\right.
$$
where the set $\mathcal{E}_\theta$ is given by $\mathcal{E}_\theta:=\big\{k \; \text{such that}\; \mathcal{R}_{k} \leqslant \sigma \theta < \mathcal{R}_{k-1} \big\}$. Figure \ref{err_ex1} shows the convergence of the identification process corresponding to each considered noise level for iteration steps $k=1:100$. To determine the stopping index $\mathcal{I}_s$ for each considered noise level, we illustrate the variations of the residuals $\mathcal{R}_k$  in Figure \ref{res_ex1}. 
\begin{figure}[H]
     \centering
     \begin{subfigure}[b]{0.53\textwidth}
         \centering
         \hspace*{.3cm}\includegraphics[width=70mm,height=60mm]{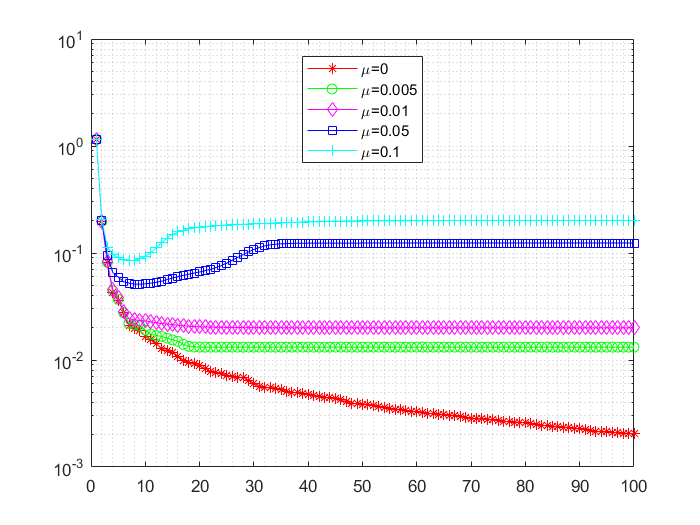}
         \caption{The errors $E_k$}
         \label{err_ex1}
     \end{subfigure}
     \hfill
     \begin{subfigure}[b]{0.46\textwidth}
         \centering
        \hspace*{.3cm} \includegraphics[width=70mm,height=60mm]{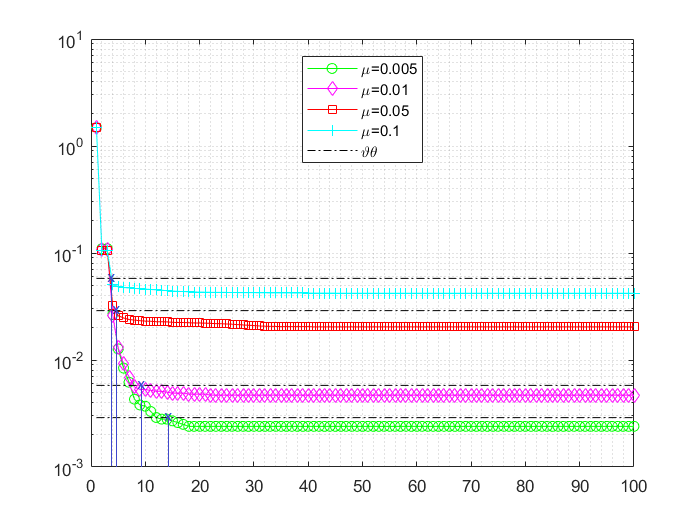}
         \caption{The residuals $\mathcal{R}_k$}
         \label{res_ex1}
     \end{subfigure}
        \caption{The errors $E_k$ and the residuals $\mathcal{R}_k$ associated to the reconstruction of $g_{ex}^{sm}$ for various relative noise levels (x: stopping index).}
        \label{conv_stab_ex1}
\end{figure}
\begin{table}[H]
  \begin{center}
\begin{tabular}{||c| c|c|c| c||} 
 \hline
 $\mu$ & $0.005$ & $0.01$ & $0.05$ & $0.1$ \\ 
 \hline
$\mathcal{I}_s$ & $14$ & $9$ & $5$ & $4$ \\  
 \hline
\end{tabular}
\end{center}
\caption{The obtained stopping indices $\mathcal{I}_s$ associated to the reconstruction of $g_{ex}^{sm}$.}
\label{ind1}
\end{table}
According to the stopping indices listed in Table \ref{ind1},  we illustrate the numerical results by using the discrepancy principle in Figure \ref{result_ex1}. 
    \begin{figure}[H]
\centering
{\includegraphics
[width=70mm,height=60mm]{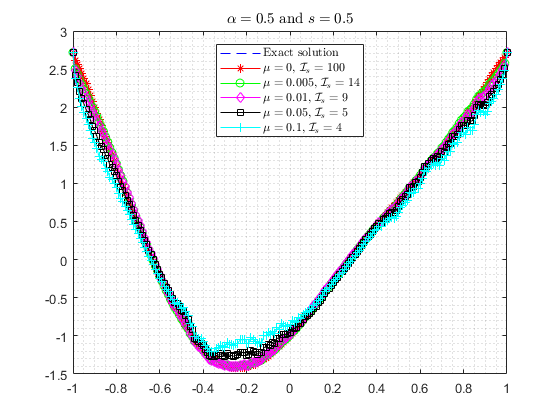} } 
\caption{The numerical results of $g_{ex}^{sm}$ for various relative noise levels.}
\label{result_ex1}
\end{figure}
\vspace{0.3cm}
\paragraph*{Example \ref{num}.4} In this example,  we reconstruct a nonsmooth initial value given by
\begin{equation*} 
 g_{ex}^{nsm}(x)  = \left\{
 \begin{array}{ll}
 0.5,  & \mbox{ } x \in [-1,-0.75) \cup (0.75, 1], \\
 \\
 1, & \mbox{ } x \in [-0.75,-0.25) \cup (0.25, 0.75], \\
 \\
 2, & \mbox{ } x \in [-0.25, 0.25],
\end{array}
\right. 
\end{equation*}
with the same relative noise levels chosen in Example \ref{num}.3. We present the convergence of the estimated solutions to the exact one in Figure \ref{conv_stab_ex2}. More precisely, we illustrate
the approximation errors $E_k$ and residuals $\mathcal{R}_k$ for iteration steps $k=1:100$ with
the considered choices of the parameters $\mu$ in Figures \ref{err_ex2} and \ref{res_ex2}, respectively.

\begin{figure}[H]
     \centering
     \begin{subfigure}[b]{0.53\textwidth}
         \centering
         \hspace*{.3cm}\includegraphics[width=70mm,height=60mm]{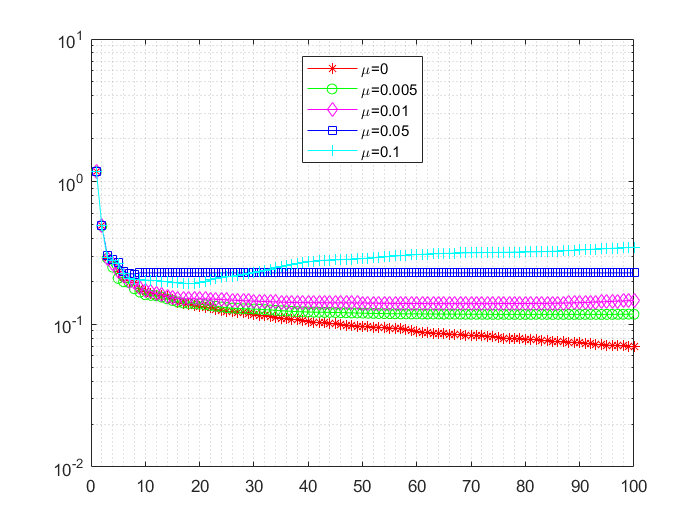}
         \caption{The errors $E_k$}
         \label{err_ex2}
     \end{subfigure}
     \hfill
     \begin{subfigure}[b]{0.46\textwidth}
         \centering
         \hspace*{.3cm}\includegraphics[width=70mm,height=60mm]{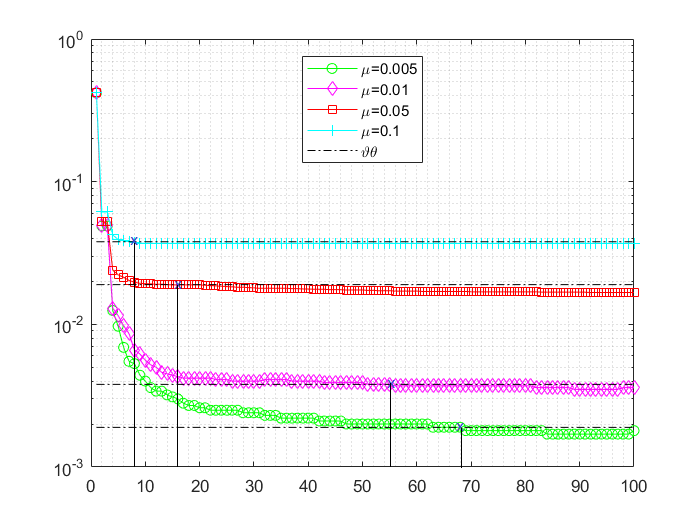}
         \caption{The residuals $\mathcal{R}_k$}
         \label{res_ex2}
     \end{subfigure}
        \caption{The errors $E_k$ and the residuals $\mathcal{R}_k$ associated to the reconstruction of $g_{ex}^{nsm}$ for various relative noise levels (x: stopping index).}
        \label{conv_stab_ex2}
\end{figure}
The obtained stopping indices for each choice $\mu$ which are determined with the help of curves presented in Figure \ref{res_ex2} are listed in Table \ref{ind2}.
\begin{table}[H]
  \begin{center}
\begin{tabular}{||c| c|c|c| c||} 
 \hline
 $\mu$ & $0.005$ & $0.01$ & $0.05$ & $0.1$ \\ 
 \hline
$\mathcal{I}_s$ & $68$ & $55$ & $16$ & $8$ \\  
 \hline
\end{tabular}
\end{center}
\caption{The obtained stopping indices $\mathcal{I}_s$ associated to the reconstruction of $g_{ex}^{nsm}$.}
\label{ind2}
\end{table}

In Figure \ref{result_ex2}, we illustrate the comparisons of recovered solutions with the exact ones for each considered relative noise level $\mu$.
    \begin{figure}[H]
\centering
{\includegraphics
[width=70mm,height=60mm]{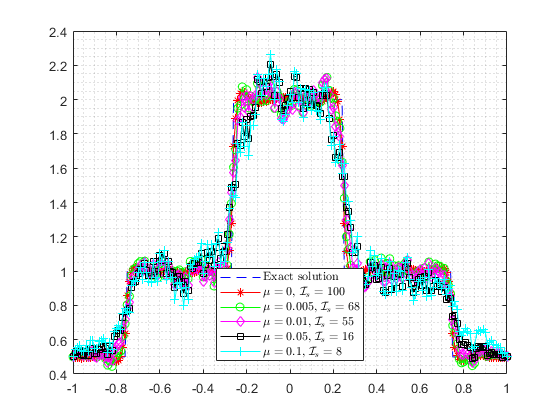} } 
\caption{The numerical results of $g_{ex}^{nsm}$ for various relative noise levels.}
\label{result_ex2}
\end{figure}
We  have the following conclusions from the examples 5.3 and 5.4: 
\begin{itemize}
\item From Figure \ref{result_ex2}, one can observe that the presence of the singular points affects the accuracy of the reconstructed initial value $g_{ex}^{nsm}$. Indeed, the efficiency of our iterative procedure decreases sharply near these points. In comparison with the obtained results in Example 5.3 (see Figure \ref{result_ex1}), one can deduce that the used proposed method works better for the
reconstruction of a smooth function than the identifiability of a nonsmooth
one.
  \item As we can see from Figures \ref{err_ex1} and \ref{err_ex2}, with the increasing of noise levels in the input data $h^\theta$, the approximation error $E_k$ is increasing. Moreover, it can be seen that, for a relative noise $\mu>0$,  the computed errors become constants or have slightly increased after a few iterations. Thus we have to stop at a suitable step. From Figures \ref{res_ex1} and \ref{res_ex2}, we see that Morozov’s discrepancy principle provides the stopping steps when the condition \eqref{con_cond} is fulfilled  for each relative noise level $\mu>0$.
    \item From Figures \ref{result_ex1} and \ref{result_ex2}, we can see that the proposed numerical procedure {\bf (CGM)} is robust with respect to noise and able to provide reasonably good reconstruction results. Indeed, we can see
that the numerical results for the two considered examples are quite accurate up to $1\%$ noise added in the final data $u(x, T)$. However, when the relative noise level $\mu$ exceeds $0.05$, we can observe some fictitious oscillations that affect the accuracy and the quality of the obtained reconstruction results. Namely, in this case, the solution found might move further away from the true solution. Thus, when using highly noisy data,
the ill-posedness of the reconstruction problem becomes more severe. As we mentioned above that in order to ameliorate these results (find more stable solutions), we need to determine the optimal regularization parameter which depends directly on the noise level. In fact, choosing an appropriate regularization parameter
is a difficult problem. The theoretical aspect of this issue has not
been addressed so far. It is still an open question. It will be the subject of a forthcoming work.\quad \\

\end{itemize}
\section{Concluding Remarks}\label{con}

In this paper, we studied an inverse problem for a space-time fractional diffusion equation. Namely, the considered inverse problem is to determine the initial value from the noise measurement of the final time. We proved a uniqueness result for the inverse problem with the help of the eigenfunction expansion method. The inverse problem is reformulated as a regularized optimization problem. By using the adjoint method, a first-order optimality condition has been derived. Based on this condition, the unknown term has been characterized as the solution of a regularized variational problem. An efficient and accurate iterative reconstruction procedure has been developed and implemented to solve the regularized variational problem; that is, the conjugate gradient method combined with Morozov’s discrepancy principle. The influences of some parameters (the fractional derivative orders, the regularization parameter, and the noisy data) on the reconstruction results have been discussed. The efficiency and accuracy of the proposed numerical procedure have been justified by some numerical simulations.\\

The authors of this paper plan to study a system of partial differential equations involving time-fractional derivatives and nonlinear diffusion operators such as $A_1(\nabla u):=-\nabla. \big(k(|\nabla u|^{2}) \nabla u \big)$ or $A_2(u):= -\nabla. \big(k(u) \nabla u \big)$. First, we study the well-posedness of the direct problem, then we define the inverse problem.  The inverse problem may be either an inverse parameter problem, or an inverse coefficient problem, or an inverse source problem. Then we study the existence and uniqueness of the solution and solve the inverse problem by the method presented in this paper.

\section{Acknowledgments} \label{ack}

\noindent The authors would like to thank Dr. J. P. Borthagay for the assistance in providing the computational experiments.

\bibliographystyle{abbrv}
\bibliography{biblio}

\begin{thebibliography}{10}

\bibitem{abdelwahed2022inverse}
M.~Abdelwahed, M.~BenSaleh, N.~Chorfi, and M.~Hassine.
\newblock An inverse problem study related to a fractional diffusion equation.
\newblock {\em Journal of Mathematical Analysis and Applications},
  512(2):126145, 2022.

\bibitem{abdulkerimov1977regularizatio}
L.~S. Abdulkerimov.
\newblock Regularization of an ill-posed cauchy problem for evolution equations
  in a banach space.
\newblock {\em Azerbaidzan. Gos. Univ. Ucen. Zap. Fiz. Mat}, 1:32--36, 1977.

\bibitem{acosta2017short}
G.~Acosta, F.~M. Bersetche, and J.~P. Borthagaray.
\newblock A short fe implementation for a 2d homogeneous dirichlet problem of a
  fractional laplacian.
\newblock {\em Computers \& Mathematics with Applications}, 74(4):784--816,
  2017.

\bibitem{acosta2019finite}
G.~Acosta, F.~M. Bersetche, and J.~P. Borthagaray.
\newblock Finite element approximations for fractional evolution problems.
\newblock {\em Fractional Calculus and Applied Analysis}, 22(3):767--794, 2019.

\bibitem{adams1975sobolev}
R.~A. Adams.
\newblock Sobolev spaces new york.
\newblock {\em San Francisco. Lodon: Academic Press}, 1:975, 1975.

\bibitem{agrawal2007fractional}
O.~Agrawal.
\newblock Fractional variational calculus in terms of riesz fractional
  derivatives.
\newblock {\em Journal of Physics A: Mathematical and Theoretical},
  40(24):6287, 2007.

\bibitem{almeida2011necessary}
R.~Almeida and D.~F. Torres.
\newblock Necessary and sufficient conditions for the fractional calculus of
  variations with caputo derivatives.
\newblock {\em Communications in Nonlinear Science and Numerical Simulation},
  16(3):1490--1500, 2011.

\bibitem{bensalah2021inverse}
M.~BenSalah and M.~Hassine.
\newblock Inverse source problem for a diffusion equation involving the
  fractional spectral laplacian.
\newblock {\em Mathematical Methods in the Applied Sciences}, 44(1):917--936,
  2021.

\bibitem{bensaleh2021inverse}
M.~BenSaleh and H.~Maatoug.
\newblock Inverse source problem for a space-time fractional diffusion
  equation.
\newblock {\em Ricerche di Matematica}, pages 1--33, 2021.

\bibitem{borthagaray2022fractional}
J.~P. Borthagaray, W.~Li, and R.~H. Nochetto.
\newblock Fractional elliptic problems on lipschitz domains: Regularity and
  approximation.
\newblock {\em arXiv preprint arXiv:2212.14070}, 2022.

\bibitem{nane}
Z.-Q. Chen, M.~M. Meerschaert, and E.~Nane.
\newblock Space--time fractional diffusion on bounded domains.
\newblock {\em Journal of Mathematical Analysis and Applications},
  393(2):479--488, 2012.

\bibitem{y1}
J.~Cheng, J.~Nakagawa, M.~Yamamoto, and T.~Yamazaki.
\newblock Uniqueness in an inverse problem for a one-dimensional fractional
  diffusion equation.
\newblock {\em Inverse problems}, 25(11):115002, 2009.

\bibitem{di2012hitchhiker}
E.~Di~Nezza, G.~Palatucci, and E.~Valdinoci.
\newblock Hitchhiker's guide to the fractional sobolev spaces.
\newblock {\em Bulletin des sciences math{\'e}matiques}, 136(5):521--573, 2012.

\bibitem{diethelm2010analysis}
K.~Diethelm.
\newblock The analysis of fractional differential equations, volume 2004 of
  lecture notes in mathematics, 2010.

\bibitem{dipierro2017nonlocal}
S.~Dipierro, X.~Ros-Oton, and E.~Valdinoci.
\newblock Nonlocal problems with neumann boundary conditions.
\newblock {\em Revista Matem{\'a}tica Iberoamericana}, 33(2):377--416, 2017.

\bibitem{du2013nonlocal}
Q.~Du, M.~Gunzburger, R.~B. Lehoucq, and K.~Zhou.
\newblock A nonlocal vector calculus, nonlocal volume-constrained problems, and
  nonlocal balance laws.
\newblock {\em Mathematical Models and Methods in Applied Sciences},
  23(03):493--540, 2013.

\bibitem{engl1996regularization}
H.~W. Engl, M.~Hanke, and A.~Neubauer.
\newblock {\em Regularization of inverse problems}, volume 375.
\newblock Springer Science \& Business Media, 1996.

\bibitem{feng}
X.~Feng, P.~Li, and X.~Wang.
\newblock An inverse random source problem for the time-fractional diffusion
  equation driven by a fractional brownian motion.
\newblock {\em Inverse Problems}, 36(4):045008, 2020.

\bibitem{galatsanos1991cross}
N.~P. Galatsanos and A.~K. Katsaggelos.
\newblock Cross-validation and other criteria for estimating the regularizing
  parameter.
\newblock In {\em [Proceedings] ICASSP 91: 1991 International Conference on
  Acoustics, Speech, and Signal Processing}, pages 3021--3024. IEEE, 1991.

\bibitem{gilbert1992global}
J.~C. Gilbert and J.~Nocedal.
\newblock Global convergence properties of conjugate gradient methods for
  optimization.
\newblock {\em SIAM Journal on optimization}, 2(1):21--42, 1992.

\bibitem{gorenflo1997fractional}
R.~Gorenflo and F.~Mainardi.
\newblock {\em Fractional calculus: integral and differential equations of
  fractional order}.
\newblock Springer, 1997.

\bibitem{grubb2015spectral}
G.~Grubb.
\newblock Spectral results for mixed problems and fractional elliptic
  operators.
\newblock {\em Journal of Mathematical Analysis and Applications},
  421(2):1616--1634, 2015.

\bibitem{ulusoy1}
N.~Guerngar, E.~Nane, R.~Tinaztepe, S.~Ulusoy, and H.~W. Van~Wyk.
\newblock Simultaneous inversion for the fractional exponents in the space-time
  fractional diffusion equation.
\newblock {\em Fractional Calculus and Applied Analysis}, 24(3):818--847, 2021.

\bibitem{hadamard1902problemes}
J.~Hadamard.
\newblock Sur les probl{\`e}mes aux d{\'e}riv{\'e}es partielles et leur
  signification physique.
\newblock {\em Princeton university bulletin}, pages 49--52, 1902.

\bibitem{hanke1993regularization}
M.~Hanke and P.~C. Hansen.
\newblock Regularization methods for large-scale problems.
\newblock {\em Surv. Math. Ind}, 3(4):253--315, 1993.

\bibitem{hrizi2022determination}
M.~Hrizi, M.~BenSalah, and M.~Hassine.
\newblock Determination of the initial density in nonlocal diffusion from final
  time measurements.
\newblock {\em Discrete \& Continuous Dynamical Systems-Series S}, 15(6), 2022.

\bibitem{jiang2020numerical}
D.~Jiang, Y.~Liu, and D.~Wang.
\newblock Numerical reconstruction of the spatial component in the source term
  of a time-fractional diffusion equation.
\newblock {\em Advances in Computational Mathematics}, 46:1--24, 2020.

\bibitem{rundell}
B.~Jin and W.~Rundell.
\newblock An inverse problem for a one-dimensional time-fractional diffusion
  problem.
\newblock {\em Inverse Problems}, 28(7):075010, 2012.

\bibitem{karapinar2020identifying}
E.~Karapinar, D.~Kumar, R.~Sakthivel, N.~H. Luc, and N.~Can.
\newblock Identifying the space source term problem for time-space-fractional
  diffusion equation.
\newblock {\em Advances in difference equations}, 2020(1):1--23, 2020.

\bibitem{KU}
K.~H. Karlsen and S.~Ulusoy.
\newblock Stability of entropy solutions for levy mixed hyperbolic-parabolic
  equations.
\newblock {\em arXiv preprint arXiv:0902.0538}, 2009.

\bibitem{kilbas2006theory}
A.~A. Kilbas, H.~M. Srivastava, and J.~J. Trujillo.
\newblock {\em Theory and applications of fractional differential equations},
  volume 204.
\newblock elsevier, 2006.

\bibitem{kozlov1989iterative}
V.~A. Kozlov and V.~G. Maz'ya.
\newblock Iterative procedures for solving ill-posed boundary value problems
  that preserve the differential equations.
\newblock {\em Algebra i Analiz}, 1(5):144--170, 1989.

\bibitem{lattes1969method}
R.~Latt{\`e}s and J.-L. Lions.
\newblock The method of quasi-reversibility: applications to partial
  differential equations.
\newblock Technical report, 1969.

\bibitem{ym5}
G.~Li, D.~Zhang, X.~Jia, and M.~Yamamoto.
\newblock Simultaneous inversion for the space-dependent diffusion coefficient
  and the fractional order in the time-fractional diffusion equation.
\newblock {\em Inverse Problems}, 29(6):065014, 2013.

\bibitem{oleg}
Z.~Li, O.~Y. Imanuvilov, and M.~Yamamoto.
\newblock Uniqueness in inverse boundary value problems for fractional
  diffusion equations.
\newblock {\em Inverse Problems}, 32(1):015004, 2015.

\bibitem{lin2007finite}
Y.~Lin and C.~Xu.
\newblock Finite difference/spectral approximations for the time-fractional
  diffusion equation.
\newblock {\em Journal of computational physics}, 225(2):1533--1552, 2007.

\bibitem{lischke2020fractional}
A.~Lischke, G.~Pang, M.~Gulian, F.~Song, C.~Glusa, X.~Zheng, Z.~Mao, W.~Cai,
  M.~M. Meerschaert, M.~Ainsworth, et~al.
\newblock What is the fractional laplacian? a comparative review with new
  results.
\newblock {\em Journal of Computational Physics}, 404:109009, 2020.

\bibitem{y2}
J.~Liu and M.~Yamamoto.
\newblock A backward problem for the time-fractional diffusion equation.
\newblock {\em Applicable Analysis}, 89(11):1769--1788, 2010.

\bibitem{ng1}
N.~H. Luc, D.~Kumar, L.~D. Long, and H.~T.~K. Van.
\newblock Final value problem for parabolic equation with fractional laplacian
  and kirchhoff's term.
\newblock {\em Journal of Function Spaces}, 2021:1--12, 2021.

\bibitem{luchko}
Y.~Luchko.
\newblock Initial-boundary-value problems for the one-dimensional
  time-fractional diffusion equation.
\newblock {\em Fractional Calculus and Applied Analysis}, 15(1):141--160, 2012.

\bibitem{MBSB}
M.~M. Meerschaert, D.~A. Benson, H.-P. Scheffler, and B.~Baeumer.
\newblock Stochastic solution of space-time fractional diffusion equations.
\newblock {\em Physical Review E}, 65(4):041103, 2002.

\bibitem{morozov2012methods}
V.~A. Morozov.
\newblock {\em Methods for solving incorrectly posed problems}.
\newblock Springer Science \& Business Media, 2012.

\bibitem{Morozov1}
V.~A. Morozov, Z.~Nashed, and A.~B. Aries.
\newblock Methods for solving incorrectly posed problems.
\newblock {\em (New York: Springer)}, 1984.

\bibitem{11}
I.~Podlubny.
\newblock Fractional differential equations, 198 academic press.
\newblock {\em San Diego, California, USA}, 1999.

\bibitem{podlubny1999fractional}
I.~Podlubny.
\newblock Fractional differential equations, volume 198 of mathematics in
  science and engineering, acad, 1999.

\bibitem{ros2014local}
X.~Ros-Oton and J.~Serra.
\newblock Local integration by parts and pohozaev identities for higher order
  fractional laplacians.
\newblock {\em arXiv preprint arXiv:1406.1107}, 2014.

\bibitem{y4}
K.~Sakamoto and M.~Yamamoto.
\newblock Initial value/boundary value problems for fractional diffusion-wave
  equations and applications to some inverse problems.
\newblock {\em Journal of Mathematical Analysis and Applications},
  382(1):426--447, 2011.

\bibitem{y3}
K.~Sakamoto and M.~Yamamoto.
\newblock Inverse source problem with a final overdetermination for a
  fractional diffusion equation.
\newblock {\em Math. Control Relat. Fields}, 1(4):509--518, 2011.

\bibitem{sun2017identification}
L.~Sun and T.~Wei.
\newblock Identification of the zeroth-order coefficient in a time fractional
  diffusion equation.
\newblock {\em Applied Numerical Mathematics}, 111:160--180, 2017.

\bibitem{salih1}
S.~Tatar, R.~T{\i}naztepe, and S.~Ulusoy.
\newblock Simultaneous inversion for the exponents of the fractional time and
  space derivatives in the space-time fractional diffusion equation.
\newblock {\em Applicable Analysis}, 95(1):1--23, 2016.

\bibitem{salih2}
S.~Tatar and S.~Ulusoy.
\newblock An inverse source problem for a one-dimensional space--time
  fractional diffusion equation.
\newblock {\em Applicable Analysis}, 94(11):2233--2244, 2015.

\bibitem{wang2015quasi}
J.-G. Wang and T.~Wei.
\newblock Quasi-reversibility method to identify a space-dependent source for
  the time-fractional diffusion equation.
\newblock {\em Applied Mathematical Modelling}, 39(20):6139--6149, 2015.

\bibitem{wei2016inverse}
T.~Wei, X.~Li, and Y.~Li.
\newblock An inverse time-dependent source problem for a time-fractional
  diffusion equation.
\newblock {\em Inverse Problems}, 32(8):085003, 2016.

\bibitem{yan2019inverse}
X.~B. Yan and T.~Wei.
\newblock Inverse space-dependent source problem for a time-fractional
  diffusion equation by an adjoint problem approach.
\newblock {\em Journal of Inverse and Ill-posed Problems}, 27(1):1--16, 2019.

\bibitem{yz1}
Y.~Zhang and X.~Xu.
\newblock Inverse source problem for a fractional diffusion equation.
\newblock {\em Inverse problems}, 27(3):035010, 2011.

\bibitem{zheng2014recovering}
G.-H. Zheng and T.~Wei.
\newblock Recovering the source and initial value simultaneously in a parabolic
  equation.
\newblock {\em Inverse Problems}, 30(6):065013, 2014.

\end{thebibliography}
\end{document}